\documentclass{amsart}
\usepackage{graphicx} 
\usepackage{ytableau}
\usepackage{amsmath,amsfonts,amssymb,amsthm,bbold,adjustbox}
\usepackage[colorlinks=true, pdfstartview=FitV, linkcolor=blue, citecolor=blue, urlcolor=darkblue]{hyperref}
\usepackage{tikz-cd}
\usepackage{tikz}
\usepackage{listings}

\DeclareMathOperator{\Ind}{Ind}

\newtheorem{lemma}{Lemma}[section]
\newtheorem{prop}[lemma]{Proposition}

\newtheorem{theorem}[lemma]{Theorem}
\newtheorem{corollary}[lemma]{Corollary}
\theoremstyle{definition}
\newtheorem{example}[lemma]{Example}
\newtheorem{remark}[lemma]{Remark}

\newcommand{\multi}[1]{\left\{\mskip-9mu\left\{#1\right\}\mskip-9mu\right\}}

\newcommand{\C}{\mathbb{C}}

\newcommand{\PP}{P\!P}

\newcommand{\ov}[1]{\overline{#1}}
\newcommand{\xt}{\tilde x}
\newcommand{\st}{\tilde s}

\newcommand{\lcr}{\{\!\!\{}
\newcommand{\rcr}{\}\!\!\}}

\newdimen\squaresize \squaresize=16pt
\newdimen\thickness \thickness=0.4pt

\def\square#1{\hbox{\vrule width \thickness
     \vbox to \squaresize{\hrule height \thickness\vss
        \hbox to \squaresize{\hss#1\hss}
     \vss\hrule height\thickness}
\unskip\vrule width \thickness}
\kern-\thickness}

\def\vsquare#1{\vbox{\square{$#1$}}\kern-\thickness}

\def\young#1{
\vbox{\smallskip\offinterlineskip
\halign{&\vsquare{##}\cr #1}}}

\def\thisbox#1{\kern-.09ex\fbox{#1}}
\def\downbox#1{\lower1.200em\hbox{#1}}

\usepackage[colorinlistoftodos]{todonotes}

\title[Rook characters as symmetric functions]{Rook characters as symmetric functions}
\author[R. Orellana, A. Wilson, M. Zabrocki]{Rosa Orellana, Alexander N. Wilson, Mike Zabrocki}
\date{\today}
\begin{document}

\begin{abstract}
We give several characterizations of an inhomogeneous basis of the ring of symmetric functions
whose evaluations are the character values of the irreducible representations
of the rook monoid (symmetric inverse semigroup).
Using Schur--Weyl duality, we show that the transition coefficients of this basis with the power symmetric basis are the characters of the propagating
partition algebra (dual symmetric inverse monoid algebra). In addition, the structure coefficients of this basis are equal to the coefficients in the smash (or Heisenberg) product of Schur functions.
\end{abstract}

\maketitle

\section{Introduction}
In \cite{OZ_character}, the authors constructed an inhomogeneous basis of symmetric functions whose evaluations are symmetric group characters. This perspective extends and complements the classical Frobenius characteristic map by introducing new bases that capture characters more directly, allowing characters to be handled with the tools of symmetric function theory. In addition, the new basis has a natural connection to the Kronecker and restriction problems. 

The rook monoid (also known as the symmetric inverse semigroup), $R_n$,
was introduced by Wagner \cite{Wagner52}
as an extension of the symmetric group that models partial permutations.
Its representation theory was studied by Munn \cite{M1957}
and Solomon \cite{solomon2002rep} and is closely related to that of the symmetric group.
In this paper we study properties of an inhomogeneous
basis of the ring of symmetric functions,
$\{ \xt_\lambda \}$, that evaluates to the irreducible characters of the rook monoid.
For this reason we call this basis the \emph{rook character basis}.
It first appeared implicitly in the
work of \cite{AS_specht} as characters of elements $[ M_\mu^t ]$ in the representation ring
${\rm Rep}(S_t)$. Later they appeared explicitly
as a basis of the ring of symmetric functions $\{\xt_\lambda\}$ defined by a power sum expansion
\cite[Equation (4)]{OZ_hopf}.  For additional
information on character bases, see the references
\cite{mitchell_lee, OZ_products, OZ_character, thibon}.

In this paper, we are interested in developing the properties of the $\xt$-basis
as we believe this basis might provide an intermediary step in tackling the
restriction and Kronecker problems.  The main result of this paper is the
following theorem, which summarizes formulae derived in this paper that
can be used to calculate the elements $\xt_\lambda$.
Condition \eqref{item:def_xt}
is the definition of the basis presented in Section \ref{sec:character bases}.
\begin{theorem} \label{theorem:TFAE}
Fix an integer $n$.  For partitions $\lambda, \mu$ partitions of $n$,
let $\chi^\lambda_{S_n}(\mu)$ be the irreducible symmetric group character and
for $\gamma, \nu$ partitions of integers smaller than or equal to $n$.  Let
$\chi^\gamma_{R_n}(\nu)$ be the irreducible rook character each indexed by the partition $\gamma$
and evaluated at an element of cycle type $\nu$.
The rook character basis is defined by any of the following equivalent conditions.
\begin{enumerate}
\item \label{item:def_xt} For each positive integer $n$ and $\lambda \vdash n$,
\[
\xt_\lambda := \sum_{\gamma \vdash n} \chi^\lambda_{S_n}(\gamma) \frac{{\overline{\mathbf p}}_\gamma}{z_\gamma}~.
\]
where ${\overline{\mathbf p}}_\gamma$ is defined in Equation \eqref{eq:def_power}.
\item \label{item:Sn_characters} For any fixed partition $\lambda$,
$\xt_\lambda$ is maximum degree $n$ and for all partitions $|\mu|<|\lambda|$,
$\xt_\lambda[\Xi_\mu] = 0$ and if $|\mu|=|\lambda|$, then $\xt_\lambda[\Xi_\mu] = \chi_{S_n}^\lambda(\mu)$.
See the introduction of Section \ref{sec:character bases} for the definition of $\Xi_\mu$.
\item \label{item:rook_characters} For any fixed partition $\lambda$ and all partitions $|\mu|\geq|\lambda|$,
$\xt_\lambda[\Xi_\mu]= \chi_{R_n}^\lambda(\mu)$, where $\chi_{R_n}^\lambda$ are the irreducible characters of the rook monoid.
\item \label{item:dual_basis} The set $\{ \xt_\lambda \}$ is the graded dual basis to
$\{ s_\lambda[\Omega - 1] \}$ with respect to the scalar product on symmetric functions.
\item \label{item:structure} The set $\{ \xt_\lambda \}$ is the unique basis whose
structure coefficients are the same as the
smash (or Heisenberg) product coefficients on Schur functions
and $\xt_{1^r} = e_r$ for all $r \geq 1$.
\item \label{item:ms_tableaux} The set $\{ \xt_\lambda \}$ is the unique basis such that
\[
h_\mu = \sum_\lambda \mathcal{K}_{\lambda\mu} \xt_\lambda
\]
where $\mathcal{K}_{\lambda\mu}$ is the number of multiset tableaux of shape $\lambda$ and content $\mu$.
\end{enumerate}
\end{theorem}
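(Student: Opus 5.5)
Since (1) is the definition, I will prove each of (2)--(6) equivalent to it by showing that (1) satisfies the condition and that the condition determines the basis uniquely. The plan rests on the power-sum description of the $\overline{\mathbf p}$ from Equation \eqref{eq:def_power}: $\overline{\mathbf p}_\gamma = \prod_i (p_{\gamma_i}-1)$ up to the lower-order correction fixed there, so that $\overline{\mathbf p}_\gamma = p_\gamma[\,\cdot\,]$ evaluated at the variables with the ``one'' removed. Concretely, I will use the plethystic identity $\xt_\lambda = s_\lambda[X-1]$-type inversions in the form $\sum_\lambda \xt_\lambda\, s_\lambda[\,\cdot\,]$, that is, the Cauchy kernel written as
\[
\sum_\lambda \xt_\lambda[X]\, s_\lambda[Y] \;=\; \Omega[XY]\,\Omega[-Y]
\]
up to the normalisation in \eqref{eq:def_power}. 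This identity follows from (1) by summing $\chi^\lambda(\gamma)s_\lambda[Y]=p_\gamma[Y]$ over $\lambda$. It immediately yields (4), by pairing with $s_\mu[\Omega-1]$: the kernel for dual bases is $\Omega[XY]$, and $\Omega[-Y]$ is absorbed by the plethysm $X\mapsto \Omega-1$ on the dual side.

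For (2) and (3), I evaluate at $\Xi_\mu$, the multiset of eigenvalues of a permutation of cycle type $\mu$. There $p_k[\Xi_\mu]$ counts the $k$-periodic points, namely $\sum_{d\mid k,\,d\in\mu} d\, m_d(\mu)$, so $\overline{\mathbf p}_\gamma[\Xi_\mu]$ counts the elements of $S_{|\mu|}$ fixing an injection pattern. This gives $\xt_\lambda[\Xi_\mu]=\sum_{\gamma}\chi^\lambda(\gamma)\,\#\{\text{partial fixings of type }\gamma\}/z_\gamma$, which equals the Munn--Solomon formula
\[
\chi_{R_n}^\lambda(\mu)=\sum_{\nu\subseteq\mu,\ |\nu|=|\lambda|}\chi^\lambda_{S}(\nu)\cdot(\text{multiplicity of choosing cycles }\nu\text{ inside }\mu).
\]
That settles (3). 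Statement (2) is the special case $|\mu|\le|\lambda|$, where only $\nu=\mu$ survives or the sum is empty. Uniqueness in (2) holds because a symmetric function of degree $\le n$ is determined by its values at all $\Xi_\mu$ with $|\mu|\le n$: the evaluations $\overline{\mathbf p}_\gamma[\Xi_\mu]$ form a triangular, invertible matrix in the order by size.

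For (6), I expand $h_\mu=\langle h_\mu, \cdot\rangle$ using (4): the coefficient is $\langle h_\mu, s_\lambda[\Omega-1]\rangle$, the coefficient of $m_\mu$ in $s_\lambda[\Omega-1]$. Since $\Omega-1=\sum_{r\ge1}h_r$, a monomial in $s_\lambda[h_1,h_2,\dots]$ is a semistandard filling by nonempty multisets, and the $m_\mu$ coefficient counts the multiset tableaux of content $\mu$. Uniqueness is automatic because the $h_\mu$ form a basis and $\mathcal K$ is unitriangular.

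For (5), I dualise (4). The coproduct of $s_\lambda[\Omega-1]$ should be compared with the smash product, $s_\lambda * s_\mu=\sum_\nu s_{\lambda/\nu}s_{\mu/\nu'}$-type (Heisenberg) coefficients. Showing that the product on the $\xt$ side corresponds exactly to this Heisenberg structure on $s[\Omega-1]$ under the pairing is the main obstacle. I would first try the kernel identity with $Y\mapsto Y+Z$, namely $\Omega[X(Y+Z)]\Omega[-Y-Z]$, and compare it with the product of two kernels, tracking the extra factor $\Omega[-YZ]$-like terms that produce the skewing. For uniqueness, $\xt_{1^r}=e_r$ together with the structure constants determines every $\xt_\lambda$ by Jacobi--Trudi-type induction, since the $e_r$ generate the ring.
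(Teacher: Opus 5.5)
\textbf{There is a genuine gap.} Your treatment of (2) and (3) is essentially sound. The identity $\overline{\mathbf p}_\gamma[\Xi_\mu]/z_\gamma=\prod_i\binom{m_i(\mu)}{m_i(\gamma)}$ turns (1) into the subset-of-cycles form of the rook character formula, which is equivalent to Solomon's horizontal-strip formula used in the paper (both equal $\langle s_\lambda h_{|\mu|-|\lambda|},p_\mu\rangle$). One caveat: uniqueness from (3) alone needs that evaluations at \emph{large} $|\mu|$ determine a bounded-degree function (the second clause of Lemma~\ref{lemma:evaluation}), and your small-$|\mu|$ triangularity does not give that.

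The gap is the kernel identity on which you build (4), (5) and hence (6). Summing (1) against $s_\lambda[Y]$ does give $\sum_\gamma \overline{\mathbf p}_\gamma[X]\,p_\gamma[Y]/z_\gamma$, but this is not $\Omega[XY]\Omega[-Y]$, and this is not a normalisation issue. Since $\Omega[XY]\Omega[-Y]=\Omega[(X-1)Y]$, your identity would force $\xt_\lambda=s_\lambda[X-1]$. But $\xt_{(1)}=\overline{\mathbf p}_{(1)}=p_1\neq p_1-1$ and $\xt_{11}=e_2\neq e_2-e_1+1$. The $\overline{\mathbf p}_\gamma$ are not $p_\gamma[X-1]$ plus corrections. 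They are products of falling factorials in $P_i=\frac1i\sum_{d\mid i}{\boldsymbol\mu}(i/d)p_d$, with $\overline{\mathbf p}_\gamma/z_\gamma=\prod_i\binom{P_i}{m_i(\gamma)}$, so the sum is really $K(X,Y)=\prod_{i\ge1}(1+p_i[Y])^{P_i[X]}$.

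Consequences for the remaining items:
\begin{itemize}
\item (4): with your kernel, substituting $Y\mapsto\Omega[Y]-1$ does not return $\Omega[XY]$, so (4) is unproved. (6), which you derive from (4), falls with it.
\item (5): there is no argument. You leave the main step open, and your description of the Heisenberg product omits the Kronecker coefficients $g_{\delta\epsilon\zeta}$, which are its essential ingredient. Multiplying two copies of your kernel gives $\Omega[(X-1)(Y+Z)]$, i.e.\ Littlewood--Richardson structure constants, which is wrong.
\item You never check $\xt_{1^r}=e_r$ from (1), which the uniqueness clause in (5) presupposes.
\end{itemize}

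The paper closes these steps differently. It gets (4) from the Frobenius image $\phi_n(\xt_\lambda)=h_{n-|\lambda|}s_\lambda$, combined with the Scharf--Thibon duality \eqref{eq:scharfthibon} and the dual Pieri rules \eqref{eq:pieri_duals}. For (5) it uses that evaluation at $\Xi_\mu$ is a ring homomorphism. Hence the structure constants of $\{\xt_\lambda\}$ are the tensor-product multiplicities $k^\gamma_{\lambda\mu}$ of rook characters (Proposition~\ref{prop:structure}). These are identified with smash product coefficients by comparing \cite[Theorem 1]{MS25} with \cite[Lemma 4.2]{Ying22}.

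Your generating-function idea can be rescued with the correct kernel:
\begin{itemize}
\item Since $1+p_i[\Omega[Y]-1]=\Omega[p_i[Y]]$ and $p_n=\sum_{i\mid n}iP_i$, one gets $K(X,\Omega[Y]-1)=\prod_i\Omega[p_i[Y]]^{P_i[X]}=\Omega[XY]$, which is exactly (4).
\item Since $(1+p_i[Y])(1+p_i[Z])=1+p_i[Y+Z+YZ]$, one gets $K(X,Y)K(X,Z)=K(X,Y+Z+YZ)$. So the coefficient of $\xt_\gamma$ in $\xt_\alpha\xt_\beta$ is $\langle s_\alpha[Y]s_\beta[Z],s_\gamma[Y+Z+YZ]\rangle$. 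The expansion in Proposition~\ref{prop:pleth_rook_structure} then matches this with Ying's formula, without going through \cite{MS25}.
\end{itemize}
As written, however, the proposal contains none of this.
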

The characterizations in \eqref{item:Sn_characters} and \eqref{item:rook_characters} follow
from Lemma \ref{lemma:evaluation}, Proposition \ref{prop:evaluation_of_xt} and Equation \eqref{eq:evaluating_xt}.
Additional information on the characterizations in
\eqref{item:dual_basis}, \eqref{item:structure} and \eqref{item:ms_tableaux}
can be found in Section \ref{sec:pleth_notation} and \ref{section:multiset} where they are restated as
Theorems \ref{thm:dual_basis} and Proposition \ref{prop:dual_basis},
Theorem \ref{theorem:structure_characterization}
and Theorem \ref{thm:combinatorial} respectively.

The basis $\{\xt_\lambda\}$ has connections with recent work on the representation theory
of the propagating partition algebra (dual symmetric inverse monoid algebra)
\cite{EEF08, FL98, M07, MS21, MS25},
the rook monoid \cite{M1957, solomon2002rep, Wagner52, xiao2016},
and the Schur--Weyl duality of these two algebraic structures \cite{grood2002, grood2006, KM2008}.
Because of the Schur--Weyl duality between the rook algebra and the propagating partition algebra,
we have that the transition coefficients from the power sum basis to the rook character
basis are the characters of the propagating partition algebra (Theorem \ref{thm:power_sum_character}). As a corollary of this result, we are able to compute the characters of the propagating partition algebra using symmetric functions; in addition, we provide plethystic formulae for two factorizations of the character table of the propagating partition monoid (Proposition \ref{prop:plethystic_formula_for_factorization}).

The characters of the rook algebra as elements of the ring of symmetric functions
are interesting because the structure coefficients
are closely related to the reduced Kronecker coefficients, but their combinatorics seems slightly simpler.
The structure coefficients of the $\xt$-basis interpolate between
the Kronecker and Littlewood-Richardson
coefficients.
Both the algebra and the
combinatorics do not seem to have a dependence on the vector space in the Schur--Weyl
duality being sufficiently large.

Section \ref{sec:notation} of this paper introduces notation that will be used in
the remaining sections. Section \ref{sec:character bases} defines the rook character basis
via an expansion in the power sum basis of symmetric functions.
We then review the basic properties and characters of the rook monoid in Section \ref{sec:rook monoid} and of the propagating partition monoid in Section \ref{sec:prop_partitions}.
Along the way we make connections to symmetric functions and note how they can be used as a tool
to compute character values.  Finally, we provide some calculations in symmetric
functions using plethystic notation in Section \ref{sec:pleth_notation}.
In section \ref{section:multiset}, we show that a multiset generalization of the Kostka numbers arise as transition coefficients between the homogeneous and rook character bases.

\subsection{Acknowledgements} 
This material is based upon work supported by the National Science Foundation under Grant No. DMS-1929284 while the authors were in residence at the Institute for Computational and Experimental Research in Mathematics in Providence, RI, during the “Categorification and Computation in Algebraic Combinatorics” semester program in Fall 2025.

RO was supported by NSF grant DMS-2452044, AW and MZ were supported by NSERC/CRSNG.

\section{Notation}
\label{sec:notation}

\subsection{Partitions}\label{subsec:partitions}
A partition of the integer $k$ is a sequence of positive integers $\lambda = (\lambda_1, \lambda_2, \ldots, \lambda_\ell)$
such that $k = \lambda_1 + \lambda_2+\cdots+ \lambda_\ell$
and $\lambda_1 \geq \lambda_2 \geq \cdots \geq \lambda_\ell > 0$.
The set of cells of a partition is denoted
${\mathrm{cells}}(\lambda) := \{ (i,j) : 1 \leq i \leq \lambda_j, 1 \leq j \leq \ell(\lambda) \}$.
We write $\lambda\vdash k$ to denote that $\lambda$ is a partition of $k$.
The size of the partition is denoted $|\lambda| = k$
and the length is $\ell(\lambda) := \ell$.  The $\lambda_i$ are referred to as the parts of the
partition and we will sometimes use the notation $\lambda = (1^{m_1}2^{m_2}\cdots k^{m_k})$
to indicate that for each $1 \leq i \leq k$, the parts of size $i$ occur $m_i$ times and
denote this multiplicity by $m_i(\lambda)$.
If $i>\ell(\lambda)$, we take the convention that $\lambda_i=0$.

A skew partition is a pair of partitions, denoted $\lambda/\mu$, such that $\lambda_i \geq \mu_i$ for
each $1 \leq i \leq \ell(\mu)$.
We will use the notation $\lambda/\mu \in \mathcal{H}$
(read ``$\mu$ differs from $\lambda$ by a horizontal
strip'') to indicate that $\lambda_{i+1} \leq \mu_i$
for $1\leq i\leq \ell(\mu)$
and $\lambda\slash \mu \in \mathcal{V}$
(read ``$\mu$ differs from $\lambda$ by a verical strip'')
to indicate that $\mu_i \geq \lambda_i-1$ for $1 \leq i \leq \ell(\mu)$
and $\lambda_i=1$ for $\ell(\mu) < i \leq\ell(\lambda)$.

\subsection{Representation theory notation}

A representation of a monoid $M$ is a complex vector space $W$ upon which
$M$ acts linearly. A subspace $V$ of $W$ is a subrepresentation if for any $m\in M$
and $v\in V$, we have that $m.v\in V$. A representation $W$ is called irreducible
if it has no nontrivial subrepresentations.
We denote the irreducible representations of $M$ by $W_M^\lambda$ where $\lambda$
ranges over some indexing set (in this paper, $\lambda$ will always be a partition
with some restriction on its size).
For an irreducible representation $W_M^\lambda$, we denote by $\chi_M^\lambda$ its
character. That is, the map $\chi_M^\lambda:M\to\mathbb{C}$ where $\chi_M^\lambda(m)$ is
the trace of $m$ as a linear transformation of $W_M^\lambda$.
When $\mu$ indexes a generalized conjugacy class of $M$, we write $\chi_M^\lambda(\mu)$
for the character value on any representative of the generalized conjugacy class indexed by $\mu$.

Throughout the paper, we will use $S_n$ to denote the symmetric group, and $W_{S_n}^\lambda$ to be the irreducible representation of $S_n$ indexed by $\lambda\vdash n$.

\subsection{Symmetric functions}\label{sec:sf}

We will connect the representation theory of the rook monoid, propagating partition algebra, and the symmetric group to expressions within the ring of symmetric functions. The references \cite{Macdonald, Sagan, Stanley} provide a good introduction to symmetric functions. In what follows we summarize some of this for the convenience of the reader. 

The ring of symmetric functions $\Lambda := \mathbb{Q}[p_1, p_2, p_3, \ldots]$
are the polynomials in the power sum symmetric
function generators $p_i$ with ${\mathrm deg}(p_i) = i$ for $i\geq1$.
If the degree of every monomial in the power sum expansion of
$f$ are the same then we say that $f$ is of homogeneous degree (otherwise $f$ is
said to be of inhomogeneous degree).
When we refer to the degree of a symmetric function that may be inhomogeneous
we are referring to the maximum degree of all monomials in the power sum generators.
Two other sets of generators for this ring are the complete (homogeneous) and
elementary generators,
\[
h_r = \sum_{\lambda \vdash r} \frac{p_\lambda}{z_\lambda}
\qquad\qquad
e_r = \sum_{\lambda \vdash r} (-1)^{|\lambda|+\ell(\lambda)} \frac{p_\lambda}{z_\lambda}
\]
where if $\lambda = (1^{m_1}2^{m_2}\cdots k^{m_k})$, then
$p_\lambda=\prod_{i=1}^k{p_i}^{m_i}$ and $z_\lambda = \prod_{i=1}^k m_i! i^{m_i}$.  We will use the convention that
$p_0 =e_0 = h_0= 1$ and $p_{-i} = e_{-i} = h_{-i} = 0$ for $i>0$.
We will also use the Cauchy element
\[
\Omega = 1 + \sum_{n \geq 1} h_n
\]
as an element in the completion of the symmetric functions.

Four standard bases of the symmetric functions
are the $\{ s_\lambda \}$ Schur, $\{ h_\lambda \}$ homogeneous, $\{ e_\lambda \}$ elementary
and $\{ p_\lambda \}$ power sum bases.  Each of these bases are of homogeneous degree,
however in the next section we will also give some additional
background on bases of inhomogeneous degree \cite{OZ_hopf, OZ_character}.

The Hall inner product will be used to extract coefficients in a symmetric function expression. Recall that for this inner product the Schur functions are self dual and the power sum basis is self dual up to a scalar multiple:
\[
\left< s_\lambda, s_\mu \right> = \frac{1}{z_\lambda} \left< p_\lambda, p_\mu \right> = \delta_{\lambda\mu}~.
\]


To properly state the results in this paper we will use the operation
of plethysm.  For an element $f \in \Lambda$, we define $p_k[f]$ to be $f$ with $p_r$ replaced by $p_{kr}$.
We then define $p_\lambda[f] = p_{\lambda_1}[f] p_{\lambda_2}[f] \cdots p_{\lambda_\ell}[f]$
and if $g = \sum_{\gamma} a_\gamma p_\gamma$ for coefficients $a_\gamma \in \mathbb{Q}$,
then $g[f] = \sum_{\gamma} a_\gamma p_\gamma[f]$.

\section{Inhomogeneous Character Bases}
\label{sec:character bases}

Given a multiset $\Xi=\multi{z_1,z_2,\ldots,z_\ell}$ of complex numbers and a
symmetric function $f\in\Lambda$, let $f[\Xi]\in\C$ be the evaluation of
$f$ at this multiset where the evaluation is calculated by
replacing each $p_i$ in $f$ with the sum of $\xi^i$, for every $\xi\in \Xi$, since we have that $p_i[\Xi] = \sum_{k=1}^\ell {z_k}^i$.

Let $\Xi_\mu$ be the multiset of eigenvalues of a permutation matrix representing a
permutation with cycle type $\mu$.
In this section, we introduce some inhomogeneous bases of the space of symmetric
functions that have particularly nice evaluations at $\Xi_\mu$.
The evaluations at these eigenvalues will characterize the bases in the
sense of the following lemma.

\begin{lemma}\label{lemma:evaluation}
(see \cite[Proposition 39, Corollary 41]{OZ_character})
Let $f,g \in \Lambda$ be symmetric functions
of degree less than or equal to some positive integer $n$.
Assume that $$f[\Xi_\gamma] = g[\Xi_\gamma]$$ for all partitions
$\gamma$ such that $|\gamma| \leq n$ (or alternatively for all $|\gamma| > m$ for some fixed
integer $m$ greater than the degree of $f$ and $g$), then
$$f=g$$
as elements of $\Lambda$.
\end{lemma}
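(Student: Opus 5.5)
By linearity it suffices to prove that if $h := f-g$ has degree at most $n$ and $h[\Xi_\gamma]=0$ for all $|\gamma|\le n$, then $h=0$. The variant hypothesis, vanishing for all $|\gamma|>m$, will be handled by a closely related argument. Write
\[
h=\sum_{|\lambda|\le n} c_\lambda p_\lambda .
\]
The key computation is the evaluation $p_k[\Xi_\gamma]$. The eigenvalues of a $j$-cycle are the $j$-th roots of unity, and $\sum_{\zeta^j=1}\zeta^k$ equals $j$ if $j\mid k$ and $0$ otherwise. Hence
\[
p_k[\Xi_\gamma]=\sum_{j\mid k} j\, m_j(\gamma).
\]
This shows that $h[\Xi_\gamma]$ is a polynomial $H(m_1,m_2,\dots)$ in the multiplicities $m_j=m_j(\gamma)$.

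Next observe that the substitution $p_k\mapsto \sum_{j\mid k} j\,m_j$ is an invertible, triangular linear change of variables: by Möbius inversion, $m_k = \frac1k\sum_{j\mid k}\mu(k/j)\,p_j$. It therefore gives an injective ring map $\mathbb{Q}[p_1,p_2,\dots]\to\mathbb{Q}[m_1,m_2,\dots]$. Under the grading with $\deg m_j=j$ this map preserves degree, since $p_k$ goes to a homogeneous polynomial of degree $k$. Consequently $H$ is a polynomial of weighted degree at most $n$, and it is nonzero whenever $h\neq0$.

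It remains to show that such a polynomial cannot vanish at every point $(m_1,m_2,\dots)\in\mathbb{Z}_{\ge0}^{\infty}$ with $\sum_j j\,m_j\le n$. This is the main step. I would argue by induction on the number of variables, using a finite-grid vanishing lemma: a polynomial whose degree in $m_1$ is $d_1$ is determined by its values at $m_1=0,\dots,d_1$. More precisely, write
\[
H=\sum_a m_1^a\, H_a(m_2,m_3,\dots),
\]
where each $H_a$ has weighted degree at most $n-a$. Fix $(m_2,m_3,\dots)$ with $\sum_{j\ge2} j\,m_j = w$. The one-variable polynomial $\sum_a m_1^a H_a$ then vanishes at $m_1=0,\dots,n-w$. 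For each $a>n-w$, the coefficient $H_a$ would need weighted degree at most $n-a<w$. To conclude, I would instead induct on the top weighted-degree component, which should make the degree count work: the highest weighted-degree part of $H$ vanishes on the appropriate staircase of lattice points. This is the delicate point, and I would consult the argument cited in \cite{OZ_character}, namely that the lattice points in $\{\sum j\,m_j\le n\}$ are unisolvent for polynomials of weighted degree at most $n$.

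For the variant hypothesis, the set $\{\gamma: |\gamma|>m\}$ is Zariski dense: it contains all points with $m_1$ large, with $(m_2,m_3,\dots)$ arbitrary. Fix $(m_2,m_3,\dots)$ and vary $m_1$ over infinitely many values. This shows each $H_a$ vanishes at all lattice points, hence $H=0$ and so $h=0$. In this variant the degree bound is not even needed.
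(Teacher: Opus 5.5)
Your reduction of $h=f-g$ to a polynomial $H(m_1,\dots,m_n)$ in the part multiplicities is correct. Your treatment of the variant hypothesis is also correct: $H$ vanishes on all of $\mathbb{Z}_{\ge0}^n$ except the finitely many points with $\sum_j j\,m_j\le m$, hence is zero, and, as you say, no degree bound is needed there.

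The gap is in the main case $|\gamma|\le n$, which is the real content of the lemma. You correctly reduce it to showing that the lattice points $\{\sum_j j\,m_j\le n\}$ are unisolvent for polynomials of weighted degree at most $n$, but you do not prove this. The induction on the monomial expansion $H=\sum_a m_1^aH_a$ breaks down for the reason you observe. A coefficient $H_a$ with $a>n-w$ need not vanish at a point of weight $w$: if $H_n$ is a nonzero constant, the one-variable polynomial has degree $n$ but you only know $n-w+1$ of its zeros. The alternative of inducting on the top weighted-degree component is not an argument as stated, since values on a bounded set of lattice points do not separate the homogeneous components of $H$. Deferring to \cite{OZ_character} at exactly this point leaves the lemma unproved.

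The missing idea is to expand in falling factorials rather than monomials.
\begin{itemize}
\item The products $\prod_j (m_j)_{a_j}$ with $\sum_j j\,a_j\le n$ form a basis of the polynomials of weighted degree at most $n$.
\item Such a product vanishes at a lattice point $b$ unless $b_j\ge a_j$ for every $j$, and at $b=a$ it equals $\prod_j a_j!\neq0$.
\item So the evaluation matrix is triangular with nonzero diagonal for the coordinatewise order, which is exactly the unisolvence you need.
\end{itemize}
This is precisely what the paper's $\overline{\mathbf p}$ basis encodes. Under your change of variables, $\overline{\mathbf p}_\lambda$ becomes $\prod_i i^{m_i(\lambda)}(m_i)_{m_i(\lambda)}$. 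Lemma~\ref{lem:evaluation_of_pbar}, whose proof does not use Lemma~\ref{lemma:evaluation}, gives $\overline{\mathbf p}_\lambda[\Xi_\alpha]=0$ unless $m_i(\alpha)\ge m_i(\lambda)$ for all $i$, and $\overline{\mathbf p}_\lambda[\Xi_\lambda]=z_\lambda$.

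To finish: since $\overline{\mathbf p}_\lambda$ equals $p_\lambda$ plus terms of lower degree, you may write $f-g=\sum_{|\lambda|\le n}c_\lambda\overline{\mathbf p}_\lambda$. If some $c_\lambda\ne0$, choose such a $\lambda$ minimal in this order. Then $(f-g)[\Xi_\lambda]=c_\lambda z_\lambda\ne0$ with $|\lambda|\le n$, a contradiction.

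Equivalently, your $m_1$-induction closes if you write $H=\sum_a (m_1)_a G_a$, where each $G_a$ still has weighted degree at most $n-a$. Evaluate successively at $m_1=0,1,2,\dots$: at $m_1=a$ only $a!\,G_a$ survives. Then induct on the number of variables, stating the claim for arbitrary positive weights.
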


Define an inhomogeneous basis of the symmetric functions as
\begin{equation}\label{eq:def_power}
{\overline{\mathbf p}}_{\lambda} := \prod_{i\ge 1} i^{m_i} \prod_{r = 0}^{m_i-1} \left( \Big( \frac{1}{i} \sum_{d|i} {\boldsymbol \mu}(i/d) p_d \Big) - r \right)
\end{equation}
where $\lambda=(1^{m_1}2^{m_2}\cdots \ell^{m_\ell})$ and ${\boldsymbol \mu}$ is the M\"obius function on integers.

\begin{lemma}\label{lem:evaluation_of_pbar}
	For any partitions $\lambda$ and $\alpha$,
	\[\overline{\mathbf p}_\lambda[\Xi_\alpha]=\begin{cases}
		0 & \text{ if $m_i(\alpha)<m_i(\lambda)$ for any $i$}\\
		\frac{z_\alpha}{z_{\alpha-\lambda}} & \text{ otherwise}
	\end{cases}\]
	where $\alpha-\lambda$ is the partition in which the part $i$ appears
	with multiplicity $m_i(\alpha)-m_i(\lambda)$.
\end{lemma}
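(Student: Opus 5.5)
The proof rests on one computation: the evaluation of the inner expression $\frac{1}{i}\sum_{d|i}{\boldsymbol \mu}(i/d)\,p_d$ at $\Xi_\alpha$. Once that is known, the product formula \eqref{eq:def_power} turns into falling factorials and the statement follows by comparing with $z_\alpha$.

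\textbf{Step 1: power sums at $\Xi_\alpha$.} A cycle of length $k$ contributes the $k$-th roots of unity to $\Xi_\alpha$. The sum of the $d$-th powers of the $k$-th roots of unity equals $k$ if $k\mid d$ and $0$ otherwise. Hence
\[
p_d[\Xi_\alpha]=\sum_{k\mid d} k\, m_k(\alpha).
\]

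\textbf{Step 2: Möbius inversion.} Set $f(d)=p_d[\Xi_\alpha]$ and $g(k)=k\,m_k(\alpha)$, so that $f(d)=\sum_{k|d} g(k)$. Möbius inversion gives $\sum_{d|i}{\boldsymbol \mu}(i/d)f(d)=g(i)=i\,m_i(\alpha)$. Therefore
\[
\Big(\frac{1}{i}\sum_{d|i}{\boldsymbol \mu}(i/d)\,p_d\Big)[\Xi_\alpha]=m_i(\alpha).
\]
This is the key identity; the remaining steps are bookkeeping.

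\textbf{Step 3: evaluating the product.} Evaluation at $\Xi_\alpha$ is a ring homomorphism, so by \eqref{eq:def_power}
\[
\overline{\mathbf p}_\lambda[\Xi_\alpha]=\prod_{i\ge1} i^{m_i(\lambda)}\prod_{r=0}^{m_i(\lambda)-1}\big(m_i(\alpha)-r\big).
\]
The inner product is the falling factorial $m_i(\alpha)(m_i(\alpha)-1)\cdots(m_i(\alpha)-m_i(\lambda)+1)$.

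\textbf{Step 4: the two cases.}
\begin{itemize}
\item If $m_i(\alpha)<m_i(\lambda)$ for some $i$, the falling factorial for that $i$ contains the factor $r=m_i(\alpha)$, which is zero. So the whole product vanishes.
\item Otherwise each falling factorial equals $m_i(\alpha)!/(m_i(\alpha)-m_i(\lambda))!$. The result is
\[
\prod_i \frac{i^{m_i(\lambda)}\,m_i(\alpha)!}{(m_i(\alpha)-m_i(\lambda))!}.
\]
Since $z_\alpha=\prod_i i^{m_i(\alpha)}m_i(\alpha)!$ and $z_{\alpha-\lambda}=\prod_i i^{m_i(\alpha)-m_i(\lambda)}(m_i(\alpha)-m_i(\lambda))!$, this product is exactly $z_\alpha/z_{\alpha-\lambda}$.
\end{itemize}

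The main obstacle is Step 2: getting the Möbius inversion in the right direction, consistent with the ${\boldsymbol \mu}(i/d)$ convention in \eqref{eq:def_power}. Everything else is direct.
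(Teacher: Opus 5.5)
Your proposal is correct and follows the paper's proof essentially step for step. Like the paper, you compute $p_d[\Xi_\alpha]=\sum_{d'|d}d'\,m_{d'}(\alpha)$, use M\"obius inversion to evaluate the inner expression to $m_i(\alpha)$, and then compare the resulting falling factorials with $z_\alpha/z_{\alpha-\lambda}$. The only difference is that you derive the root-of-unity power sum directly, whereas the paper cites Lascoux for it.
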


\begin{proof}
	First note that by \cite[Lemma 5.10.1]{lascoux},
	\[p_d[\Xi_{(r)}]=\begin{cases}r & \hbox{ if }r|d\\0&\hbox{ otherwise}\end{cases},\]
	so $p_d[\Xi_\alpha]=\sum_{d'|d}d'm_{d'}(\alpha)$.
	By M\"obius inversion,
	\[\frac{1}{i}\sum_{d|i}{\boldsymbol \mu}(i/d)\left(\sum_{d'|d}d'm_{d'}(\alpha)\right)=m_i(\alpha).\]
	Therefore evaluating Equation~\eqref{eq:def_power} at $\Xi_\alpha$, we obtain
	\begin{align*}
		\overline{\mathbf p}_\lambda[\Xi_\alpha]&=\prod_{i\ge 1} i^{m_i(\lambda)} \left(m_i(\alpha)\right)_{m_i(\lambda)}~,
	\end{align*}
    where the notation $(x)_k := x(x-1)\cdots (x-k+1)$.
	Note that if $m_i(\alpha)<m_i(\lambda)$ for any
	$i$, then $\left(m_i(\alpha)\right)_{m_i(\lambda)}=0$, hence $\overline{\mathbf p}_\lambda[\Xi_\alpha]=0$.
	In the case that $m_i(\alpha)\geq m_i(\lambda)$ for all $i$ then

	\begin{align*}
		\overline{\mathbf p}_\lambda[\Xi_\alpha]&=\prod_{i\ge 1} i^{m_i(\lambda)} \left(m_i(\alpha)\right)_{m_i(\lambda)}\\
												&=\prod_{i\ge 1} i^{m_i(\lambda)-m_i(\alpha)}i^{m_i(\alpha)} \frac{m_i(\alpha)!}{(m_i(\alpha)-m_i(\lambda))!}
												=\frac{z_\alpha}{z_{\alpha-\lambda}}.\qedhere
	\end{align*}
\end{proof}

\begin{prop}\label{prop:evaluation_of_pbar}
	For any partitions $\lambda$ and $\alpha$,
	\[\overline{\mathbf p}_\lambda[\Xi_\alpha]=\langle p_\lambda h_{|\alpha|-|\lambda|},p_\alpha\rangle.\]
\end{prop}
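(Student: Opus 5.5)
The plan is to compute the right-hand side directly in the power sum basis and compare it with Lemma~\ref{lem:evaluation_of_pbar}. Write $k = |\alpha| - |\lambda|$. If $k<0$ then $h_k = 0$ by convention, so the right-hand side is $0$. In that case $|\lambda| > |\alpha|$, so some $m_i(\alpha) < m_i(\lambda)$ (otherwise $|\alpha| \geq |\lambda|$), and the left-hand side also vanishes by Lemma~\ref{lem:evaluation_of_pbar}. Hence we may assume $k \geq 0$.

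Expand $h_k = \sum_{\nu \vdash k} p_\nu / z_\nu$, so that
\[
p_\lambda h_k = \sum_{\nu \vdash k} \frac{p_{\lambda \cup \nu}}{z_\nu},
\]
where $\lambda\cup\nu$ is the partition whose multiplicities are $m_i(\lambda)+m_i(\nu)$. Pairing with $p_\alpha$ uses $\langle p_\rho, p_\alpha\rangle = z_\alpha \delta_{\rho\alpha}$. The only surviving terms are those $\nu$ with $\lambda\cup\nu = \alpha$.

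Such a $\nu$ exists exactly when $m_i(\alpha)\geq m_i(\lambda)$ for all $i$, and it is then unique, namely $\nu = \alpha - \lambda$ in the notation of Lemma~\ref{lem:evaluation_of_pbar}. It automatically has size $k$. Thus the right-hand side equals $z_\alpha/z_{\alpha-\lambda}$ in that case and $0$ otherwise. This matches Lemma~\ref{lem:evaluation_of_pbar} case by case.

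There is no real obstacle. The only points needing care are the uniqueness of $\nu$ (multiplicities determine a partition) and the convention $h_{-i}=0$ for the negative-degree case. Both are immediate.
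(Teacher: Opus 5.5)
Your proof is correct and follows the same route as the paper: expand $h_{|\alpha|-|\lambda|}$ in power sums, note that pairing with $p_\alpha$ leaves only the term $\nu=\alpha-\lambda$, and match the resulting value $z_\alpha/z_{\alpha-\lambda}$ against Lemma~\ref{lem:evaluation_of_pbar}. Your separate treatment of the case $|\lambda|>|\alpha|$, using the convention $h_{-i}=0$, is a small addition that the paper leaves implicit.
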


\begin{proof}
    Since $\left< p_\mu, p_\alpha \right>=z_\alpha$ if $\mu=\alpha$ and $0$ otherwise, we have that
    $\left< p_\lambda h_{|\alpha|-|\lambda|}, p_\alpha\right>=0$ unless $m_i(\alpha)\geq m_i(\lambda)$
    for each $i$.
    In this case,
\[p_\lambda h_{|\alpha|-|\lambda|} = \sum_{\gamma \vdash |\alpha|-|\lambda|} \frac{p_\lambda p_\gamma}{z_\gamma}\]
    and $\langle p_\lambda h_{|\alpha|-|\lambda|},p_\alpha\rangle = \frac{z_\alpha}{z_{\alpha-\lambda}}$.
    The result follows from Lemma \ref{lem:evaluation_of_pbar}.
\end{proof}

Now that we have related the classical basis $\{p_\lambda\}$ to this inhomogeneous
basis $\{\overline{\mathbf p}_\lambda\}$, we use it
to describe the inhomogeneous basis that analogously relates to the Schur functions $\{s_\lambda\}$.
This basis appeared implicitly in the work of Assaf and Speyer \cite{AS_specht} when they used the module
\[M_\lambda^t=\Ind_{S_{|\lambda|}\times S_{t-|\lambda|}}^{S_t} W^\lambda_{S_{|\lambda|}} \times W^{(t-|\lambda|)}_{S_{t-|\lambda|}}\]
as a basis of the representation ring of the symmetric group $S_t$.
In \cite{OZ_hopf}, the first and third authors of this paper defined the symmetric function elements that correspond to their characters through the following power sum expansion.
For an integer $n$ and a partition $\lambda \vdash n$,
\begin{equation}\label{eq:xt_to_power}
\xt_\lambda := \sum_{\gamma \vdash n} \chi^\lambda_{S_n}(\gamma) \frac{{\overline{\mathbf p}}_\gamma}{z_\gamma}~.
\end{equation}

\begin{prop}\label{prop:evaluation_of_xt}
	For any partitions $\lambda$ and $\alpha$,
	\[\xt_\lambda[\Xi_\alpha]=\langle s_\lambda h_{|\alpha|-|\lambda|},p_\alpha\rangle.\]
\end{prop}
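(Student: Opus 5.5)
The proof will follow from linearity of evaluation and of the Hall inner product, together with Proposition~\ref{prop:evaluation_of_pbar}. Let $n=|\lambda|$. The evaluation map $f\mapsto f[\Xi_\alpha]$ is linear (indeed a ring homomorphism $\Lambda\to\C$). Applying it to the defining expansion \eqref{eq:xt_to_power} gives
\[
\xt_\lambda[\Xi_\alpha]=\sum_{\gamma\vdash n}\chi^\lambda_{S_n}(\gamma)\,\frac{\overline{\mathbf p}_\gamma[\Xi_\alpha]}{z_\gamma}.
\]
By Proposition~\ref{prop:evaluation_of_pbar}, each term satisfies $\overline{\mathbf p}_\gamma[\Xi_\alpha]=\langle p_\gamma h_{|\alpha|-n},p_\alpha\rangle$. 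Note that $|\gamma|=n$ for every $\gamma$ in the sum, so the same $h_{|\alpha|-n}$ appears in every term.

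Next I use bilinearity of the Hall inner product to pull the sum inside the first argument:
\[
\xt_\lambda[\Xi_\alpha]=\Big\langle \Big(\sum_{\gamma\vdash n}\chi^\lambda_{S_n}(\gamma)\frac{p_\gamma}{z_\gamma}\Big)h_{|\alpha|-n},\,p_\alpha\Big\rangle.
\]
The inner sum is exactly the Frobenius characteristic formula $s_\lambda=\sum_{\gamma\vdash n}\chi^\lambda_{S_n}(\gamma)p_\gamma/z_\gamma$. This yields $\langle s_\lambda h_{|\alpha|-|\lambda|},p_\alpha\rangle$, as claimed.

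There is no real obstacle; the only point needing care is the case $|\alpha|<|\lambda|$. There $h_{|\alpha|-|\lambda|}=0$ by the paper's convention, so the right-hand side vanishes. On the left, each $\overline{\mathbf p}_\gamma[\Xi_\alpha]=0$, since by Lemma~\ref{lem:evaluation_of_pbar} some $m_i(\alpha)<m_i(\gamma)$ whenever $|\gamma|>|\alpha|$. Proposition~\ref{prop:evaluation_of_pbar} already covers this case, so the argument above goes through uniformly.
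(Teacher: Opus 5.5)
Your proof is correct and is essentially the paper's argument. Like the paper, you evaluate the defining expansion \eqref{eq:xt_to_power} termwise, substitute Proposition~\ref{prop:evaluation_of_pbar}, and recognize $s_\lambda=\sum_{\gamma\vdash n}\chi^\lambda_{S_n}(\gamma)p_\gamma/z_\gamma$ after pulling the sum into the inner product; your remark on the case $|\alpha|<|\lambda|$ is accurate, though, as you note, Proposition~\ref{prop:evaluation_of_pbar} already covers it.
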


\begin{proof}
	By Equation~\eqref{eq:xt_to_power} and  Proposition~\ref{prop:evaluation_of_pbar},
	\begin{align*}
		\xt_\lambda[\Xi_\alpha]&=\sum_{\gamma \vdash n} \chi^\lambda_{S_n}(\gamma) \frac{{\overline{\mathbf p}}_\gamma[\Xi_\alpha]}{z_\gamma}\\
		&=\sum_{\gamma \vdash n} \chi^\lambda_{S_n}(\gamma) \frac{\langle p_\gamma h_{|\alpha|-|\lambda|},p_\alpha\rangle}{z_\gamma}
		=\langle s_\lambda h_{|\alpha|-|\lambda|},p_\alpha\rangle. \qedhere
	\end{align*}
\end{proof}


For a fixed $n$, we may define the Frobenius image of a symmetric function by
\[
\phi_n( f) := \sum_{\mu \vdash n} f[\Xi_\mu] \frac{p_\mu}{z_\mu}.
\]
A result of Littlewood \cite{Littlewood} and Scharf and Thibon \cite[Theorem 5.1]{ST} states that for a symmetric function
$g$ of degree $n$, and symmetric function $f$, that the operator $\phi_n$ is dual to
plethysm in the following sense.  Say that $f \in \Lambda$ is of bounded degree and $g \in \Lambda$ of
homogeneous degree $n$, then:
\begin{equation}\label{eq:scharfthibon}
\left< \phi_n(f), g \right> = \left< f, g[\Omega ]\right>~.
\end{equation}
If $g \in \Lambda$ with $g$ not homogeneous of degree zero, then
$g[\Omega] = g[1 + h_1 + h_2 + \cdots]$ is not an element of $\Lambda$. Instead, like
$\Omega$, it lies in the graded completion of $\Lambda$
because it has unbounded degree (see Remark 1 of Section 2 of \cite{Macdonald}).
However, since the symmetric functions are a graded space and $f$ is specified to be
of bounded degree,
computations such as Equation \eqref{eq:scharfthibon} can be completed
up to a finite fixed degree and computations of elements in the completion can be truncated.

By Proposition \ref{prop:evaluation_of_xt},
for all partitions $\lambda$ and for any $n\geq0$,
\begin{equation}\label{eq:frob_image_xt}
\phi_n(\xt_\lambda) = h_{n-|\lambda|} s_\lambda
\end{equation}
where we recall that $h_i=0$ for $i<0$.

As noted in \cite[Remark 1]{OZ_hopf}, the $\{ \xt_\lambda \}$ are the
irreducible characters of the
rook algebra in the same way that the irreducible characters of the symmetric group
are defined as a basis in \cite{OZ_character}.
Additional details connecting the character values of the $\xt_\lambda$ to results
in the literature are in Section \ref{sec:rook monoid}
in order to motivate further study of the $\{ \xt_\lambda \}$ as
a basis of symmetric functions.


\section{The rook monoid characters and structure coefficients}
    \label{sec:rook monoid}

A partial permutation on $[n]$ is a pair $(D,f)$
of a subset $D\subseteq[n]$ called the domain and
an injective function $f:D\to [n]$.
The monoid of partial permutations on $[n]$
is called the symmetric inverse monoid.
This monoid is isomorphic to the monoid of $n\times n$ matrices
with at most one $1$ in each row and column with remaining entries all
zero with the usual matrix multiplication.
Because these matrices resemble the placement of non-attacking
rooks on a chessboard, this monoid is often called the {\it rook monoid}, $R_n$.
Going forward, we will use the name rook monoid and think of the elements
interchangeably as partial permutations or as these rook matrices.

Green's relations are five equivalence relations that characterize the elements of a semigroup by
the principal ideals they generate. For our purposes, we are interested only in the $\mathcal{J}$-classes
of a monoid $M$:
Two elements $m,n\in M$ are in the same $\mathcal{J}$-class if and only if $MmM=MnM$.
A $\mathcal{J}$-class is called regular if it contains an idempotent.

Given an idempotent $e\in M$, the maximal subgroup of $e$, denoted $G_e$,
is the largest subset of $M$ containing $e$ for which the monoid multiplication
restricts to a group structure with $e$ as the identity.

There is a notion of generalized conjugacy classes for monoids (see \cite[Section 7.1]{steinberg} for more details).
For our purposes, we only need to know that generalized conjugacy classes of an inverse monoid
$M$ are in correspondence with the conjugacy classes of its maximal subgroups $G_e$ where $e$
ranges over representative idempotents for the regular $\mathcal{J}$-classes of $M$ \cite[Proposition 7.4]{steinberg}.

The generalized conjugacy classes of $R_n$ are indexed by partitions $\lambda$ with $0\leq|\lambda|\leq n$.
A representative of the conjugacy class indexed by $\lambda\vdash k$ is given by any element of $S_k$ with
cycle type $\lambda$ written as a permutation matrix padded by enough zeros to create an $n\times n$ matrix.
We say that $\sigma\in R_n$ has cycle type $\lambda$
if it belongs to the generalized conjugacy class indexed by $\lambda$
(and this is sometimes referred to as the Munn class \cite{M1957}).
Note that the nonzero eigenvalues of $\sigma$ as a matrix are the multiset $\Xi_\lambda$.
For more details about generalized conjugacy in the rook monoid see \cite[Section 3.2]{steinberg}.

The generalized conjugacy classes of a finite monoid $M$ are
in correspondence with irreducible representations of $M$ \cite[Theorem 7.10]{steinberg},
so we can construct a square character table for a finite monoid
in much the same way that we construct one for a finite group. Each column is indexed by a generalized
conjugacy class of $M$, and each row is indexed by an irreducible representation of $M$.
In \cite{M1957}, W.~D.~Munn studied this character table for $R_n$.
In \cite{solomon2002rep}, L.~Solomon gave the following formula
for computing these characters in terms of those for the symmetric group.

\begin{lemma}\cite[Proposition 3.11]{solomon2002rep}\label{lem:Rn_char_computation}
	Fix $n$ and let $\lambda,\alpha$ be partitions with $|\lambda|,|\alpha|\leq n$. Then
	\[\chi^{\lambda}_{R_n}(\alpha)=\sum_{\mu/\lambda\in\mathcal{H}}\chi_{S_{|\alpha|}}^\mu(\alpha)\]
	where the sum is over partitions $\mu\vdash |\alpha|$ such that $\mu/\lambda$ is a horizontal strip.
\end{lemma}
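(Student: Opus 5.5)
This lemma is quoted from Solomon, so the task is to reconstruct his argument from the structure of $R_n$. We build the irreducible $R_n$-modules by inflating symmetric group modules through the $\mathcal{J}$-classes, then compute traces on the class representatives.

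Write $E_k$ for the idempotent that is the identity on $[k]$ and zero elsewhere. Its maximal subgroup is $G_{E_k}\cong S_k$. Following Munn (or \cite[Theorem 5.5]{steinberg} applied to inverse monoids), the irreducible module indexed by $\lambda\vdash k$ is
\[
W^\lambda_{R_n}=\C L_k\otimes_{\C S_k} W^\lambda_{S_k},
\]
where $L_k$ is the set of rank-$k$ partial permutations with domain $[k]$, viewed as injections $[k]\to[n]$. Concretely, $W^\lambda_{R_n}$ has a basis indexed by pairs ($k$-subset $A\subseteq[n]$, basis vector of $W^\lambda$ transported to $A$). An element $\sigma\in R_n$ acts on the block for $A$ as follows:
\begin{itemize}
\item if $A\subseteq\mathrm{dom}(\sigma)$, it sends that block to the block for $\sigma(A)$ via the induced bijection;
\item if $A\not\subseteq\mathrm{dom}(\sigma)$, it acts by $0$.
\end{itemize}

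Now take the representative $\sigma$ of the class $\alpha\vdash m$: a permutation of $[m]$ of cycle type $\alpha$, extended by zero.

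1. A block for $A$ contributes to the trace only if $\sigma(A)=A$. This forces $A\subseteq[m]$ and $A$ to be a union of cycles of $\sigma|_{[m]}$.
2. For such an $A$, the diagonal block is the action of $\sigma|_A\in S_A$ on $W^\lambda$.
3. Summing over these $A$ gives
\[
\chi^\lambda_{R_n}(\alpha)=\sum_{A}\chi^\lambda_{S_k}(\sigma|_A).
\]
4. This sum is exactly the character of $\Ind_{S_k\times S_{m-k}}^{S_m}\bigl(W^\lambda\otimes \mathrm{triv}\bigr)$ evaluated at $\sigma|_{[m]}$. To see this, use the standard induced character formula over cosets: a coset $gS_k\times S_{m-k}$, identified with a $k$-subset $A$, contributes exactly when $\sigma$ stabilizes $A$. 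The trivial factor on the complement contributes $1$.
5. The Frobenius characteristic of this induced module is $s_\lambda h_{m-k}$. By Pieri's rule, $s_\lambda h_{m-k}=\sum_{\mu/\lambda\in\mathcal H}s_\mu$, so the induced character at $\alpha$ is $\sum_{\mu/\lambda\in\mathcal{H}}\chi^\mu_{S_m}(\alpha)$.

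This is the claim. Note that the formula is independent of $n$ as long as $m\le n$. When $m<k$ no block is fixed and both sides vanish, since no horizontal strip fits and $h_{m-k}=0$.

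As a consistency check, step 4 says the right-hand side equals $\langle s_\lambda h_{|\alpha|-|\lambda|},p_\alpha\rangle$, matching Proposition \ref{prop:evaluation_of_xt}.

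The main obstacle is the first step: justifying that the modules $\C L_k\otimes_{\C S_k}W^\lambda$ are irreducible, are pairwise distinct, and exhaust all irreducibles. I would cite Munn's construction of the irreducible representations of inverse semigroups, or Steinberg's induction/restriction equivalence from maximal subgroups. Once that is in place, the trace computation is the routine orbit count described above.
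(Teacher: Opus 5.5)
Your proof is correct. The paper gives no argument here: it quotes Solomon and afterwards uses the lemma only through its restatement \eqref{eq:evaluating_xt}. That restatement comes from $\chi^\mu_{S_{|\alpha|}}(\alpha)=\langle s_\mu,p_\alpha\rangle$ and the Pieri rule $h_{|\alpha|-|\lambda|}s_\lambda=\sum_{\mu/\lambda\in\mathcal{H}}s_\mu$, which is exactly your step 5.

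What your route adds is a self-contained derivation from the induced-module description of the irreducibles. It also connects to the paper's machinery more directly than you indicate. Group the sets $A$ in your step 3 by the cycle type $\gamma$ of $\sigma|_A$. This gives $\chi^\lambda_{R_n}(\alpha)=\sum_{\gamma\vdash|\lambda|}\chi^\lambda_{S_{|\lambda|}}(\gamma)\prod_i\binom{m_i(\alpha)}{m_i(\gamma)}$. By Lemma \ref{lem:evaluation_of_pbar}, $\prod_i\binom{m_i(\alpha)}{m_i(\gamma)}=z_\alpha/(z_\gamma z_{\alpha-\gamma})=\overline{\mathbf p}_\gamma[\Xi_\alpha]/z_\gamma$, with both sides vanishing unless $m_i(\gamma)\leq m_i(\alpha)$ for all $i$. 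So this sum is exactly $\xt_\lambda[\Xi_\alpha]$ as computed from \eqref{eq:xt_to_power}. Your steps 4 and 5 are then the representation-theoretic counterpart of Propositions \ref{prop:evaluation_of_pbar} and \ref{prop:evaluation_of_xt}. In effect your argument proves \eqref{eq:evaluating_xt} outright instead of importing it from Solomon.

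The obstacle you flag needs only a light citation. Grant two standard facts:
\begin{enumerate}
\item $\C R_n$ is semisimple (Munn).
\item Its simple modules are pairwise non-isomorphic quotients of the modules $\C L_k\otimes_{\C S_k}W^\lambda_{S_k}$, one for each $\lambda$ with $|\lambda|\leq n$ (Clifford--Munn--Ponizovskii).
\end{enumerate}
Then the count $\sum_{\lambda}\bigl(\binom{n}{|\lambda|}\dim W^\lambda_{S_{|\lambda|}}\bigr)^2=\sum_{k=0}^n\binom{n}{k}^2k!=|R_n|$ forces every one of those quotient maps to be an isomorphism.
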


Since $\chi_{S_{|\alpha|}}^\mu(\alpha) = \left< s_\mu, p_\alpha \right>$
and because $h_{|\alpha|-|\lambda|} s_\lambda = \sum_{\mu/\lambda\in\mathcal{H}} s_\mu$,
in symmetric function notation this lemma is equivalent to
\begin{equation}\label{eq:evaluating_xt}
\chi^{\lambda}_{R_n}(\alpha)
= \left< h_{|\alpha|-|\lambda|} s_\lambda, p_\alpha \right>
= \xt_\lambda[\Xi_\alpha]~.
\end{equation}

Note that the character value $\chi^\lambda_{R_n}(\alpha)$ is
independent of $n$ so long as $n\geq\max\{|\lambda|,|\alpha|\}$.

\begin{remark}
Equations \eqref{eq:xt_to_power}, \eqref{eq:evaluating_xt}
and Lemma \ref{lemma:evaluation} together
imply that the first three characterizations
of Theorem \ref{theorem:TFAE} are all equivalent.
The $\xt_\lambda$ are the only symmetric functions of
degree $|\lambda|$ that evaluate to rook characters
and this characterizes the elements.
\end{remark}

\begin{example}\label{ex:xt_to_power}
Because the character of a representation evaluated at the identity element of the monoid gives
the dimension of the representation, the dimension of the irreducible representation of $R_n$
indexed by $\lambda$ can be computed by evaluating $\xt_\lambda$ at $\Xi_{(1^n)}$. That is,
\[\dim(W_{R_n}^\lambda)=\xt_\lambda(1^n).\]

As an explicit example computed from formula \eqref{eq:xt_to_power},
\[\xt_{21}=p_{1}-p_{11}+\frac13p_{111}-\frac13p_{3},\]
so the dimension of $W_{R_4}^{21}$ is \[\xt_{21}(1,1,1,1)=4-16+\frac{1}{3}\cdot64-\frac{1}{3}\cdot 4=8.\]
\end{example}

\begin{corollary}\label{corollary:xtr_equals_er}
For all $r \geq 1$, $\xt_{1^r} = e_r$.
\end{corollary}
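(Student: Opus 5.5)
We use Lemma \ref{lemma:evaluation}. Both $\xt_{1^r}$ and $e_r$ have degree at most $r$: $e_r$ is homogeneous of degree $r$, and by Equation \eqref{eq:xt_to_power} $\xt_{1^r}$ is a combination of the $\overline{\mathbf p}_\gamma$ with $\gamma\vdash r$, each of which is a polynomial of degree $r$ in the $p_i$. It therefore suffices to check that $\xt_{1^r}[\Xi_\alpha]=e_r[\Xi_\alpha]$ for every partition $\alpha$. In fact it is enough to do this for all $|\alpha|$ larger than some fixed $m>r$, but we can do it for every $\alpha$.

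For the left-hand side, Proposition \ref{prop:evaluation_of_xt} gives
\[
\xt_{1^r}[\Xi_\alpha]=\langle s_{1^r} h_{|\alpha|-r}, p_\alpha\rangle .
\]
This vanishes when $|\alpha|<r$, since $h_{|\alpha|-r}=0$ then. By the Pieri rule,
\[
h_{k-r}e_r = s_{(k-r+1,1^{r-1})}+s_{(k-r,1^r)}
\]
for $k=|\alpha|> r$, and $h_0e_r=e_r$ when $k=r$. So the left-hand side equals the sum of the two hook characters $\chi^{(k-r+1,1^{r-1})}_{S_k}(\alpha)+\chi^{(k-r,1^r)}_{S_k}(\alpha)$.

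For the right-hand side, $e_r[\Xi_\alpha]$ is the $r$-th elementary symmetric polynomial in the eigenvalues of a permutation matrix $P_\sigma$ of cycle type $\alpha$. This is the trace of $\sigma$ acting on $\Lambda^r(\C^k)$. It is zero if $r>k$, which matches the left-hand side.

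The remaining step is the classical fact that $\Lambda^r(\C^k)$, as an $S_k$-module with $\C^k$ the permutation representation, has Frobenius characteristic
\[
\langle e_r h_{k-r},\cdot\rangle \;=\; s_{(k-r+1,1^{r-1})}+s_{(k-r,1^r)}.
\]
This follows from $\C^k=W^{(k-1,1)}\oplus W^{(k)}$ together with $\Lambda^j W^{(k-1,1)}=W^{(k-j,1^j)}$. A more self-contained route is to expand the generating function
\[
\sum_r t^r e_r[\Xi_\alpha]=\prod_i\bigl(1-(-t)^{\alpha_i}\bigr)
\]
and compare it with
\[
\sum_r t^r\langle h_{k-r}e_r,p_\alpha\rangle .
\]
That comparison works because $\langle h_{k-r}e_r,p_\alpha\rangle$ is the coefficient of $t^r$ in $\langle \prod_i (p_{\alpha_i}$-expansion of $H(1)E(t))\rangle$, namely $\prod_i(1+(-1)^{\alpha_i-1}t^{\alpha_i})$.

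The main obstacle is carrying out this last identification cleanly. I would use the generating-function version, since it uses only the $p$-expansions of $h$ and $e$ from Section \ref{sec:sf}. Lemma \ref{lemma:evaluation} then gives $\xt_{1^r}=e_r$.
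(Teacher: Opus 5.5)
Your proof is correct, but it takes a longer route than the paper. The paper uses the first form of Lemma~\ref{lemma:evaluation}, agreement on all $\Xi_\gamma$ with $|\gamma|\le r$, so only $|\alpha|\le r$ has to be checked. For $|\alpha|<r$ both sides vanish. For $|\alpha|=r$ one has $\xt_{1^r}[\Xi_\alpha]=\chi^{(1^r)}_{S_r}(\alpha)$, while $e_r[\Xi_\alpha]$ is the product of the $r$ eigenvalues. That product is the determinant of the permutation matrix, which is the sign.

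You essentially had both of these cases already: the vanishing on both sides when $r>|\alpha|$, and $h_0e_r=e_r$ when $|\alpha|=r$. So the step you single out as the main obstacle, $|\alpha|>r$, is not needed for the corollary.

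Your treatment of that step is nonetheless sound, although your wording of it is garbled. You have $\sum_r t^r e_r[\Xi_\alpha]=\prod_i\bigl(1-(-t)^{\alpha_i}\bigr)$. Since $H(1)E(t)=\exp\bigl(\sum_{n\ge1}p_n(1+(-1)^{n-1}t^n)/n\bigr)$ and $\langle \exp(\sum_n p_n a_n/n),p_\alpha\rangle=\prod_i a_{\alpha_i}$, one gets $\sum_r t^r\langle h_{|\alpha|-r}e_r,p_\alpha\rangle=\prod_i\bigl(1+(-1)^{\alpha_i-1}t^{\alpha_i}\bigr)$, which is the same product.

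What your route buys is the stronger identity $e_r[\Xi_\alpha]=\langle e_r h_{|\alpha|-r},p_\alpha\rangle$ for every $\alpha$. This says $\chi^{(1^r)}_{R_n}$ is literally the character of $\Lambda^r(\C^n)$ on every Munn class. Equivalently, $\Lambda^r(\C^k)\cong\Ind_{S_r\times S_{k-r}}^{S_k}(\mathrm{sgn}\otimes\mathrm{triv})$, which is a cleaner justification of your hook-decomposition remark than the exterior powers of $W^{(k-1,1)}$. The paper's route gets the corollary with essentially no computation, because a symmetric function of degree $r$ is already determined by its values on the $\Xi_\gamma$ with $|\gamma|\le r$.
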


\begin{proof}
The symmetric function $\xt_{1^r}$ is an element
such that if $|\alpha|<r$, then $\xt_{1^r}[\Xi_\alpha]=0$ and
if $|\alpha|=r$, then $\xt_{1^r}[\Xi_\alpha] = \chi^{(1^r)}_{R_r}(\alpha)
= \chi^{(1^r)}_{S_r}(\alpha)$ which is equal to the sign of
a permutation of cycle type $\alpha$.

Recall \cite[p. 19]{Macdonald}
that $e_r[X] = \sum_{i_1 < i_2 < \cdots < i_r} x_{i_1} x_{i_2} \cdots x_{i_r}$
and that if the alphabet $X$ has cardinality less than $r$, then $e_r[X]=0$.
We know therefore that $e_r[\Xi_\alpha]=0$ if $|\alpha|<r$ and if $|\alpha|=r$ then
$\Xi_\alpha$ has exactly $r$ non-zero eigenvalues and
$e_r[\Xi_\alpha]$ is equal to the product of the eigenvalues in $\Xi_\alpha$.
That is $e_r[\Xi_\alpha]$ is the product of the eigenvalues
of a permutation matrix of cycle type $\alpha$ (and this in turn is equal to the
determinant of the permutation matrix and therefore equal to the sign of
the permutation).  Therefore, by Lemma \ref{lemma:evaluation}, $\xt_{1^r}=e_r$.
\end{proof}

Given two irreducible characters $\chi^\lambda_{R_n}$ and $\chi^\mu_{R_n}$ of $R_n$, their pointwise product $\chi^\lambda_{R_n}\cdot\chi^\mu_{R_n}$ is also a character of $R_n$ corresponding to the character of
a tensor product of irreducible rook monoid modules with a diagonal action.
This character then decomposes into a nonnegative sum of irreducible characters.
We write $k_{\lambda\mu}^\gamma$ for the multiplicity of $\chi^\gamma_{R_n}$ in this decomposition.
That is,
\begin{equation}\label{eq:character_structure_coefficients}
\chi^\lambda_{R_n}\cdot\chi^\mu_{R_n}=\sum_{\gamma:|\gamma|\leq n}k_{\lambda\mu}^\gamma\chi^\gamma_{R_n}.
\end{equation}

The following proposition is a consequence of
Lemma \ref{lemma:evaluation}.  It states that the $\{\xt_\lambda\}$
basis has the same structure coefficients as this internal product of the
irreducible rook characters.

\begin{prop}\label{prop:structure}
Let $\lambda, \mu$ be partitions,
\[
\xt_\lambda \xt_\mu = \sum_{\gamma:|\gamma|\leq |\lambda|+|\mu|}k_{\lambda\mu}^\gamma \xt_\gamma~.
\]
\end{prop}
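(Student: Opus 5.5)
The plan is to compare both sides of the identity through their evaluations at the multisets $\Xi_\alpha$ and then invoke Lemma \ref{lemma:evaluation}. Set $N = |\lambda|+|\mu|$. The left side $\xt_\lambda\xt_\mu$ has degree at most $N$, because $\xt_\lambda$ has degree $|\lambda|$ and $\xt_\mu$ has degree $|\mu|$ (from Equation \eqref{eq:xt_to_power}, since $\overline{\mathbf p}_\gamma$ has degree $|\gamma|$). The right side is a finite combination of $\xt_\gamma$ with $|\gamma|\le N$, so it also has degree at most $N$. It therefore suffices to show that both sides agree at $\Xi_\alpha$ for every partition $\alpha$ with $|\alpha|\ge n$ for some fixed $n\geq N$. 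This uses the alternative hypothesis of Lemma \ref{lemma:evaluation}, where agreement is required for all $|\alpha|$ larger than a fixed $m$ exceeding the degrees.

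Fix $n \ge N+1$ (any $n\ge N$ large enough for the lemma) and take $\alpha$ with $|\alpha|\le n$, and in particular all $\alpha$ with $N<|\alpha|\le n$. Evaluation at $\Xi_\alpha$ is a ring homomorphism $\Lambda\to\C$. Hence, by Equation \eqref{eq:evaluating_xt},
\[
(\xt_\lambda\xt_\mu)[\Xi_\alpha]=\xt_\lambda[\Xi_\alpha]\,\xt_\mu[\Xi_\alpha]=\chi^\lambda_{R_n}(\alpha)\chi^\mu_{R_n}(\alpha).
\]
Applying Equation \eqref{eq:character_structure_coefficients} and Equation \eqref{eq:evaluating_xt} again, this equals
\[
\sum_{|\gamma|\le n} k^\gamma_{\lambda\mu}\,\chi^\gamma_{R_n}(\alpha)=\sum_{|\gamma|\le n}k^\gamma_{\lambda\mu}\,\xt_\gamma[\Xi_\alpha].
\]
We have now matched the evaluations at all $\alpha$ with $|\alpha|\le n$ against a sum over $|\gamma|\le n$.

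The main obstacle is twofold: the coefficients $k^\gamma_{\lambda\mu}$ must not depend on $n$, and they must vanish for $|\gamma|>N$. First I would show that $k^\gamma_{\lambda\mu}=0$ whenever $|\gamma|>N$. The functions $\xt_\gamma$ with $|\gamma|\le n$ are linearly independent, being a basis, and their evaluations at $\Xi_\alpha$ for $|\alpha|\le n$ form the invertible character table of $R_n$. So the $k^\gamma_{\lambda\mu}$ are exactly the coefficients of the expansion of the degree-$\le N$ element $\xt_\lambda\xt_\mu$ in the $\xt$-basis. Concretely, write $\xt_\lambda\xt_\mu=\sum_{|\gamma|\le N}c_\gamma\xt_\gamma$; this is possible since $\{\xt_\gamma:|\gamma|\le N\}$ spans the symmetric functions of degree at most $N$, as their top-degree parts are the Schur functions $s_\gamma$ by Equation \eqref{eq:xt_to_power}. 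Evaluating this expansion at $\Xi_\alpha$ for all $|\alpha|\le n$ and comparing with the character decomposition gives $\sum_\gamma (c_\gamma-k^\gamma_{\lambda\mu})\chi^\gamma_{R_n}(\alpha)=0$ for all $\alpha$. Invertibility of the character table, which follows from the linear independence of irreducible characters of the semisimple algebra $\C R_n$, then yields $k^\gamma_{\lambda\mu}=c_\gamma$. In particular $k^\gamma_{\lambda\mu}$ is independent of $n$ and vanishes for $|\gamma|>N$, which establishes the proposition. (Lemma \ref{lemma:evaluation} is then essentially used only to justify that the expansion is unique.)
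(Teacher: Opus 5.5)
Your proof is correct and follows the same route as the paper's proof. Both evaluate at $\Xi_\alpha$, use multiplicativity of evaluation together with \eqref{eq:character_structure_coefficients} and \eqref{eq:evaluating_xt}, and conclude by uniqueness; yours is in fact more careful, since comparing with $\xt_\lambda\xt_\mu=\sum_{|\gamma|\le N}c_\gamma\xt_\gamma$ via invertibility of the character table of $R_n$ explicitly shows that $k^\gamma_{\lambda\mu}$ is independent of $n$ and vanishes for $|\gamma|>|\lambda|+|\mu|$, points the paper passes over (your opening remark about agreement for $|\alpha|\ge n$ is muddled, but the final argument does not need it).
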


\begin{proof}
Let $n\geq |\lambda|+|\mu|$.
The $k_{\alpha\beta}^\gamma$ are the structure coefficients of the characters
of the rook monoid, hence by Equation \eqref{eq:character_structure_coefficients},
for any partition $\tau$ such that $|\tau| \leq n$,
\[
\chi^{\lambda}_{R_n}(\tau) \chi^{\mu}_{R_n}(\tau)
= \sum_{\gamma} k_{\lambda\mu}^\gamma \chi^\gamma_{R_n}(\tau).
\]
We have that for partitions $\lambda$, $\tilde{x}_\lambda[\Xi_\nu] = \chi^{\lambda}_{R_n}(\nu)$,
then by Lemma \ref{lemma:evaluation} it follows that the $\tilde{x}$-basis have the same structure coefficients.
\end{proof}

In \cite{AFM} the authors introduced what they referred to as the
Heisenberg product (smash product) on species and developed notation for its
computation on the Hopf algebras of permutations, quasisymmetric functions,
noncommutative symmetric functions and symmetric functions.
Their motivation for introducing this product was to both interpolate and unify
the classical external and internal products on Hopf algebras.

Follow up work on this product by Ying \cite{Ying20, Ying22}
gave more explicit formulae for the smash product on Schur
functions in terms of Littlewood-Richardson coefficients
and Kronecker coefficients.
In particular, the smash product coefficients interpolate between
the Littlewood-Richardson coefficients (the highest degree components) and the Kronecker
coefficients (the lowest degree components).

In a surprise connection, we noticed that Equation \eqref{eq:rook_kronecker}
for the structure coefficients of pointwise product of
the rook characters also appears as the coefficients of the {\it smash product}
of Schur functions \cite{Ying22} as Lemma 4.2.
We formalize this statement as the following proposition, the proof of this result is a comparison of \cite[Theorem 1]{MS25} and \cite[Lemma 4.2]{Ying22}:

\begin{prop}\label{prop:xtproduct_equals_smashproduct}
For partitions $\alpha, \beta$ and $\gamma$ such that $|\gamma| \leq |\alpha|+|\beta|$,
the coefficient of $\xt_\gamma$ in the product $\xt_\alpha \xt_\beta$
is equal to the coefficient of $s_\gamma$ in the product $s_\alpha \# s_\beta$
(following Definition 2.1 of \cite{Ying22}).
\end{prop}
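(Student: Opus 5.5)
Rather than compare formulas from \cite{MS25} and \cite{Ying22} directly, I will prove the identity intrinsically using Lemma~\ref{lemma:evaluation} and the Frobenius image $\phi_n$. The smash product of \cite{AFM}, restricted to symmetric functions, can be written as
\[
f \# g = \sum f_{(1)}\big(f_{(2)} * g_{(1)}\big) g_{(2)},
\]
where $\Delta$ is the usual coproduct and $*$ is the Kronecker (internal) product. Degree conventions force the middle Kronecker factors to have equal degree. I will check this formula against Definition~2.1 of \cite{Ying22} first. The key step is then to show that the linear map $\Psi: s_\lambda \mapsto \xt_\lambda$ satisfies
\[
\Psi(f\#g)=\Psi(f)\Psi(g).
\]

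To establish this, I fix $n \geq |\alpha|+|\beta|$. By Lemma~\ref{lemma:evaluation}, two elements of degree at most $n$ agree once their evaluations at $\Xi_\nu$ agree for all $|\nu|\le n$. Equivalently, they agree once their Frobenius images $\phi_m$ agree for every $m\le n$, because $\phi_m(f)$ encodes exactly the values $f[\Xi_\nu]$ with $|\nu|=m$. Equation~\eqref{eq:frob_image_xt} gives
\[
\phi_m(\xt_\lambda)=h_{m-|\lambda|}s_\lambda,
\]
so by linearity $\phi_m(\Psi(f)) = \sum_k h_{m-k}\, f_k$, where $f_k$ is the degree-$k$ part of $f$. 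Since evaluation is a ring homomorphism, $\phi_m(FG)=\phi_m(F)*\phi_m(G)$ (Kronecker product in degree $m$). The claim therefore reduces to the identity
\[
(h_{m-a}s_\alpha)*(h_{m-b}s_\beta) = \sum_{k} h_{m-k}\,(s_\alpha\# s_\beta)_k .
\]

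I will prove this reduced identity with the standard Kronecker-product formula for products,
\[
(fg)*u = \sum (f*u_{(1)})(g*u_{(2)}),
\]
together with $h_j * u = u$ for $u$ of degree $j$. Expanding $(h_{m-a}s_\alpha)*(h_{m-b}s_\beta)$ proceeds in two steps. First, split $h_{m-b}s_\beta$ via the coproduct of $s_\beta$ and of $h_{m-b}$ (note $\Delta h_j=\sum h_i\otimes h_{j-i}$). Then apply the product formula twice, so the factor $s_\alpha$ pairs with pieces $s_{\beta_{(1)}}h_i$ and the factor $h_{m-a}$ pairs with pieces $s_{\beta_{(2)}}h_{j}$.

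The combinatorics should then collapse the expansion to the required form:
\begin{itemize}
\item Kronecker products with $h$ act as the identity.
\item Products of complete symmetric functions reassemble into a single $h_{m-k}$.
\item What remains are Kronecker products $s_{\alpha_{(2)}}*s_{\beta_{(1)}}$ of equal degree, multiplied by the outer factors $s_{\alpha_{(1)}}$ and $s_{\beta_{(2)}}$. This is exactly the shape of the smash product.
\end{itemize}
This bookkeeping is the main obstacle: matching the indices of the iterated coproducts to the conventions of \cite{Ying22} without double counting. If it becomes unwieldy, I will fall back on the comparison the paper indicates. That route means checking that the expression for $k_{\alpha\beta}^\gamma$ from \cite[Theorem 1]{MS25}, namely a sum over triples of Littlewood--Richardson coefficients times a Kronecker coefficient, coincides term by term with \cite[Lemma 4.2]{Ying22}.

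Finally, with $\Psi(s_\alpha\#s_\beta)=\xt_\alpha\xt_\beta$ established, I extract the coefficient of $\xt_\gamma$ on the left. It equals the coefficient of $s_\gamma$ in $s_\alpha\#s_\beta$, because $\Psi$ maps the Schur basis bijectively onto the $\xt$ basis. Combined with Proposition~\ref{prop:structure}, this identifies that coefficient with $k_{\alpha\beta}^\gamma$, which proves the proposition.
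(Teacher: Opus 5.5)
Your proposal is correct, and its main route is genuinely different from the paper's. The paper gives no intrinsic argument. It observes that the formula \eqref{eq:rook_kronecker} for $k_{\alpha\beta}^\gamma$ from \cite[Theorem 1]{MS25} is the same sum $\sum g_{\delta\epsilon\zeta}c^\alpha_{\delta\sigma}c^\beta_{\epsilon\rho}c^\gamma_{\zeta\rho\sigma}$ that \cite[Lemma 4.2]{Ying22} gives for the coefficient of $s_\gamma$ in $s_\alpha\#s_\beta$, and combines this with Proposition~\ref{prop:structure}. That is exactly your fallback. You instead show that $\Psi\colon s_\lambda\mapsto\xt_\lambda$ is an algebra map $(\Lambda,\#)\to(\Lambda,\cdot)$, using only \eqref{eq:frob_image_xt}, Lemma~\ref{lemma:evaluation} and $(fg)*u=\sum(f*u_{(1)})(g*u_{(2)})$.

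The bookkeeping you flag does close. With $a=|\alpha|$, $b=|\beta|$ and $U=h_{m-b}s_\beta$, you get $(h_{m-a}s_\alpha)*U=\sum U_{(1)}(s_\alpha*U_{(2)})$ with $\deg U_{(2)}=a$. Take $U_{(1)}\otimes U_{(2)}=h_is_{\beta_{(1)}}\otimes h_js_{\beta_{(2)}}$ with $i+j=m-b$. The expansion $s_\alpha*(h_js_{\beta_{(2)}})=\sum s_{\alpha_{(1)}}(s_{\alpha_{(2)}}*s_{\beta_{(2)}})$ absorbs $h_j$. So only $h_i$ survives, with $i=m-(a+b-d)$, where $d$ is the common degree of the Kronecker factors. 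Nothing needs to reassemble. Cocommutativity of $\Delta$ makes it irrelevant whether the Kronecker factor uses $g_{(1)}$ or $g_{(2)}$, so any convention in \cite{AFM,Ying22} works. For $m<\max(a,b)$ both sides vanish.

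Your route buys self-containment and an explanation of the coincidence. The expression $\phi_m(\Psi(f))=\sum_k h_{m-k}f_k$ is an induced character, and Mackey's formula for Kronecker products of such characters is precisely the overlap structure defining the Heisenberg product. Expanding $s_\alpha\#s_\beta$ in Schur functions then re-derives \eqref{eq:rook_kronecker}, i.e.\ \cite[Theorem 1]{MS25}, instead of assuming it. The paper's route is a two-line citation resting entirely on those two external computations. Your closing appeal to Proposition~\ref{prop:structure} is unnecessary, since the statement only compares the two coefficients.
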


\section{The propagating partition monoid and characters}
\label{sec:prop_partitions}

The rook monoid $R_n$ acts naturally on $\C^n$ by multiplication by rook matrices; hence $R_n$ act diagonally on
the tensor power $(\C^n)^{\otimes k}$. By \cite{KM2008}, this action of $R_n$ is Schur--Weyl
dual to the action of the propagating partition monoid, denoted $\PP_k$, on the same space so long as $n\geq k$.
Using the decomposition of $(\C^n)^{\otimes k}$ as an $R_n$-module in \cite[Example 3.18]{solomon2002rep} and the double-centralizer
theorem, we have the decomposition \begin{equation}\label{eq:SW_decomp}
	(\C^n)^{\otimes r}=\bigoplus_{\lambda:0\leq|\lambda|\leq\max\{n,k\}} W_{R_n}^\lambda\otimes W_{\PP_k}^\lambda
\end{equation} as an $R_n\times \PP_k$-module.

Explicitly, $\PP_k$ consists of set partitions $\pi$ of $\{1,2,\ldots,k\}\cup\{\overline1,\overline2,\ldots,\overline{k}\}$
such that every block of $\pi$ contains at least  one of each unbarred and barred number.
Such a set partition can be represented graphically
by a diagram consisting of a top row of dots labeled $\{1,2,\ldots,k\}$ and a bottom row of dots labeled
$\{\overline1,\overline2,\ldots,\overline{k}\}$ whose connected components give the blocks of $\pi$.

\begin{example} \label{ex:diagram_PP5}
	The set partition $\pi=\{\{1,\overline1,\overline2\},\{2,4,\overline3\},\{3,\overline4\},\{5,\overline5\}\}\in\PP_5$ is given diagrammatically by \[
	\begin{tikzpicture}[xscale=.5,yscale=.5,line width=1.25pt] 
                \foreach \i in {1,...,5}  { \path (\i,1.25) coordinate (T\i); \path(\i,1.8) node {$\i$}; \path (\i,.25) coordinate (B\i); \path(\i,-.3) node {$\ov{\i}$}; } 
                \filldraw[fill= black!12,draw=black!12,line width=4pt]  (T1) -- (T5) -- (B5) -- (B1) -- (T1);
                \draw[black] (T1)--(B1)--(B2)--(T1);
                \draw[black] (B3)--(T2)  .. controls +(.5,-.5) and +(-.5,-.5) .. (T4)--(B3);
                \draw[black] (T3)--(B4);
                \draw[black] (T5)--(B5);
                \foreach \i in {1,...,5}  {\filldraw[fill=black,draw=black,line width = 1pt] (T\i) circle (4pt);
                			     \filldraw[fill=black,draw=black,line width = 1pt] (B\i) circle (4pt);}
       \end{tikzpicture}.\]
\end{example}

The action of $\pi\in\PP_k$ on $(\C^n)^{\otimes k}$ can be computed on a basis element
$e_{i_1}\otimes e_{i_2}\otimes\cdots\otimes e_{i_k}$ by placing the tensor factors
beneath the bottom row of the diagram of $\pi$. If any distinct vectors sit below vertices
that are connected, the result is zero. Otherwise, the vertices in the top row are labeled
with whatever vector they are connected to on the bottom row, and the resulting tensor product
on top is the output of $\pi$ acting on this basis element.

\begin{example}
	For $\pi$ the diagram in Example \ref{ex:diagram_PP5},
       we see that
       \[\pi.(e_1\otimes e_2\otimes e_3\otimes e_4\otimes e_5)=
       \begin{cases} e_1\otimes e_3\otimes e_4\otimes e_3\otimes e_5&\hbox{ if } e_1=e_2\\
       0&\hbox{ otherwise}\end{cases}.\]
\end{example}

The product of two elements of $\PP_k$ is given by their composition as maps on $(\C^n)^{\otimes k}$.
Note that the rank of $\pi$ as a map on this space is equal to $\ell(\pi)$, which leads to the useful
observation that for any $\pi_1,\pi_2\in \PP_k$, \begin{equation}\label{eq:rank_monotone}
	\ell(\pi_1\pi_2)\leq\min\{\ell(\pi_1),\ell(\pi_2)\}.
\end{equation}

\subsection{Characters of $\PP_k$}
\label{subsec:characers_PP}

By \cite[Theorem 2.2]{FL98}, the $\mathcal{J}$-classes of $\PP_k$ are $\{J_i:1\leq i\leq k\}$ where \[J_i=\{\pi\in \PP_k:\ell(\pi)=i\}.\]
For each $1\leq i\leq k$, we choose an idempotent \[e_i=\{\{1,\ov1\},\{2,\ov2\},\ldots,\{i-1,\ov{i-1}\},\{i,i+1,\ldots,k,\ov{i},\ov{i+1},\ldots,\ov{k}\}\}\in J_i.\]

In order to compute with characters of $\PP_k$, we now give an explicit set of generalized conjugacy class representatives.

Fix $k\geq 1$.
Given a permutation $\sigma\in S_i$ for $i\leq k$, define $\sigma[k]\in\PP_k$ as follows.
Consider the set partition $\{\{\sigma(1),\ov1\},\{\sigma(2),\ov2\},\ldots,\{\sigma(i),\ov{i}\}\}$.
To form $\sigma[k]$, we add $\{i+1,i+2,\ldots,k\}$ to the block containing $i$ and $\{\ov{i+1},\ov{i+2},\ldots,\ov{k}\}$ to the block containing $\ov{i}$

\begin{prop}\label{prop:maximal_subgps_of_PP}
	The map \begin{align*}
		S_i&\to G_{e_i}\\
		\sigma&\mapsto \sigma[k]
	\end{align*} is a group isomorphism between the symmetric group $S_i$ and the maximal subgroup $G_{e_i}\subseteq\PP_k$.
\end{prop}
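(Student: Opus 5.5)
We prove the statement in three steps: $\sigma[k]$ lies in $G_{e_i}$; the map $\sigma\mapsto\sigma[k]$ is an injective homomorphism; and every element of $G_{e_i}$ is of the form $\sigma[k]$. Throughout we view elements of $\PP_k$ either as diagrams or as operators on $(\C^n)^{\otimes k}$ with $n\ge k$, and we compose diagrams by stacking.

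\emph{Step 1: $e_i=\mathrm{id}[k]$, and $\sigma\mapsto\sigma[k]$ is multiplicative.} Here $\mathrm{id}$ is the identity of $S_i$, and the first claim is immediate from the definitions. For the second, compute $\sigma[k]\tau[k]$ by stacking $\tau[k]$ below $\sigma[k]$. For $j<i$ (or whenever the block avoids the tail), the top vertex $\sigma(\tau(j))$ connects through the middle vertex $\tau(j)$ to $\overline{j}$. The only non-singleton-pair blocks are the ones containing the tails $\{i,\dots,k\}$ and $\{\overline{i},\dots,\overline{k}\}$:
\begin{itemize}
\item in $\sigma[k]$, the top tail $\{i+1,\dots,k\}$ is attached to the block containing $i=\sigma(\sigma^{-1}(i))$, and that block also contains $\overline{\sigma^{-1}(i)}$;
\item the bottom tail $\{\overline{i+1},\dots,\overline{k}\}$ is attached to the block of $\overline{i}$.
\end{itemize}
Chasing the middle row shows the tails end up attached exactly as in $(\sigma\tau)[k]$. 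The key check is that the middle vertices $i+1,\dots,k$ merge the block of $\sigma[k]$ containing $\overline{i}$ with the block of $\tau[k]$ containing $i$, which is precisely where both tails live. Consequently no extra merging of distinct $\{\cdot,\overline{\cdot}\}$ pairs occurs, so the composite again has $i$ blocks.

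\emph{Step 2: $\sigma[k]\in G_{e_i}$ and the map is injective.} By Step 1, $e_i\sigma[k]=\sigma[k]=\sigma[k]e_i$ and $\sigma[k]\sigma^{-1}[k]=e_i$. Hence $\sigma[k]$ lies in the group of units of $e_i\PP_k e_i$, which is $G_{e_i}$. Injectivity holds because $\sigma$ can be read off $\sigma[k]$: the block containing $\overline{j}$ for $j<i$ contains $\sigma(j)$ as its smallest top element, or its unique top element when $\sigma(j)\neq i$.

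\emph{Step 3: surjectivity; this is the main obstacle.} Let $g\in G_{e_i}$. Then $g=e_ige_i$, and by \eqref{eq:rank_monotone} we have $\ell(g)\le \ell(e_i)=i$. Since $gg^{-1}=e_i$, \eqref{eq:rank_monotone} also gives $\ell(g)\ge i$, so $\ell(g)=i$.

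The condition $g=e_ig$ forces the top vertices $i,\dots,k$ to lie in a common block of $g$: left-multiplying by $e_i$ joins them through the middle row. Symmetrically, $g=ge_i$ forces $\overline{i},\dots,\overline{k}$ into a common block. Each block of $\PP_k$ must contain both top and bottom vertices. Collapsing each tail to the single vertex $i$ (respectively $\overline{i}$), we are left with $i$ top "super-vertices" and $i$ bottom ones distributed among exactly $i$ blocks, each block containing at least one of each kind. So every block has exactly one top super-vertex and one bottom super-vertex. This gives a bijection $\sigma$ of $\{1,\dots,i\}$ with $g=\sigma[k]$.

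The delicate part is verifying carefully that $e_ig=g$ really forces the tail into one block, rather than merely making it consistent with one. We plan to argue this via the operator picture: the image of $g$ must lie in the image of $e_i$, and that image consists of tensors constant on positions $i,\dots,k$. Since $g$ is a rank-$i$ map of the stated combinatorial form, constancy on those positions means they are connected in $g$; the bottom tail is handled by the dual argument on kernels.
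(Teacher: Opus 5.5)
Your proposal is correct and follows essentially the same route as the paper: the homomorphism property comes from the composition rule, and surjectivity comes from combining the rank bound \eqref{eq:rank_monotone} with $e_i g = g = g e_i$, which collapses the tails and forces exactly one top and one bottom super-vertex per block. The step you flag as delicate is actually immediate: the top row of the stacked diagram $e_i g$ is the top row of $e_i$, where $i,\ldots,k$ already share a block (and dually for $g e_i$), so your operator argument is valid but not needed.
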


\begin{proof}
	First note that only the identity element of $S_i$ indeed maps to the idempotent $e_i$ and the
	definition of the product in $\PP_k$ makes the map a homomorphism. We then need only show that
	the map is surjective.

	By Equation~\ref{eq:rank_monotone}, if $\pi\in G_{e_i}$, it must have at least $i$ blocks.
	Furthermore, because $\pi e_i=e_i\pi=\pi$, we must have that the values $i,i+1,\ldots,k$ are in
	the same block of $\pi$ and likewise with the values $\ov{i},\ov{i+1},\ldots,\ov{k}$.
	Taken together, these two facts imply that the values $1,2,\ldots,i-1$ must be in \emph{separate}
	blocks of $\pi$ and likewise for the values $\ov{1},\ov{2},\ldots,\ov{i-1}$.
	Hence, $\pi$ must be of the form $\sigma[k]$ for some $\sigma\in S_i$.
\end{proof}

Fix $k\geq1$ and let $\mu\vdash i\leq k$.
For $i<j$, define the cycle $c(i,j)=(i,i+1,\ldots,j)$.
Define \[\sigma_\mu=c(1,\mu_1)c(\mu_1+1,\mu_1+\mu_2)\cdots c(\mu_1+\mu_2+\cdots+\mu_{l-1}+1,i)\in S_i\] and set $d_\mu=\sigma_\mu[k]$.

\begin{theorem}
	The set \[\{d_\mu:\mu\vdash i\leq k\}\] forms a complete set of representatives for the generalized conjugacy classes of $\PP_k$.
\end{theorem}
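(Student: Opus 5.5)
The plan is to apply the general description of generalized conjugacy classes already quoted from \cite[Proposition 7.4]{steinberg}. Strictly, that statement is for inverse monoids, and $\PP_k$ (the dual symmetric inverse monoid) is one: every $\pi\in\PP_k$ has the inverse $\pi^*$ obtained by swapping barred and unbarred labels. Consequently the generalized conjugacy classes of $\PP_k$ are in bijection with the disjoint union, over a set of idempotent representatives $e$ of the regular $\mathcal{J}$-classes, of the conjugacy classes of the maximal subgroups $G_e$. Under this bijection an element of $G_e$ represents its own generalized conjugacy class. So the theorem reduces to three checks: that the $e_i$ represent all regular $\mathcal{J}$-classes without repetition; that $G_{e_i}\cong S_i$ via $\sigma\mapsto\sigma[k]$; and that the $\sigma_\mu$ represent the conjugacy classes of $S_i$.

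First I would handle the $\mathcal{J}$-classes. By \cite[Theorem 2.2]{FL98} they are exactly $J_1,\dots,J_k$, indexed by the number of blocks. Each $J_i$ contains the idempotent $e_i$: composing $e_i$ with itself keeps the $i-1$ singleton-pair blocks and merges the large block with itself, so $e_i^2=e_i$. Hence every $J_i$ is regular, and $\{e_1,\dots,e_k\}$ is a complete, irredundant set of idempotent representatives. Equation \eqref{eq:rank_monotone} shows that $\ell$ separates the classes, so distinct $e_i$ lie in distinct classes.

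Next I would invoke Proposition \ref{prop:maximal_subgps_of_PP}, which gives the group isomorphism $S_i\to G_{e_i}$, $\sigma\mapsto\sigma[k]$. An isomorphism carries conjugacy classes of $S_i$ bijectively onto conjugacy classes of $G_{e_i}$. The conjugacy classes of $S_i$ are indexed by cycle type $\mu\vdash i$, and $\sigma_\mu$ is a product of disjoint consecutive cycles of lengths $\mu_1,\mu_2,\dots$, so it has cycle type $\mu$. Therefore $\{d_\mu=\sigma_\mu[k]:\mu\vdash i\}$ is a complete set of conjugacy class representatives of $G_{e_i}$. Taking the union over $1\le i\le k$ and applying the Steinberg correspondence gives the claim.

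The main obstacle is making sure the hypotheses of \cite[Proposition 7.4]{steinberg} are met as stated, in particular that representatives in different $G_{e_i}$ are never generalized-conjugate. I would argue this directly from rank: generalized conjugacy preserves the $\mathcal{J}$-class of the idempotent power $\pi^\omega$, and $d_\mu^\omega=e_i$ has $\ell=i$, so $d_\mu$ and $d_\nu$ with $|\mu|\neq|\nu|$ cannot be conjugate. As a sanity check I would also compare the count $\sum_{i\le k}p(i)$ with the number of irreducible representations $W^\lambda_{\PP_k}$, $1\le|\lambda|\le k$, appearing in \eqref{eq:SW_decomp}. The edge case $i=1$, where the unique block gives $G_{e_1}$ trivial and a single class $d_{(1)}$, is consistent with this count.
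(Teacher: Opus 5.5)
Your proposal is correct and follows the same route as the paper: use \cite[Proposition 7.4]{steinberg} to reduce to the conjugacy classes of the maximal subgroups $G_{e_i}$, $1\le i\le k$, and then transport the cycle-type representatives $\sigma_\mu$ of $S_i$ through the isomorphism $\sigma\mapsto\sigma[k]$ of Proposition~\ref{prop:maximal_subgps_of_PP}. The extra checks you include (that $\PP_k$ is an inverse monoid, that each $J_i$ is regular with idempotent $e_i$, and that elements of different $G_{e_i}$ are never generalized-conjugate) are details the paper leaves implicit.
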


\begin{proof}
	By \cite[Proposition 7.4]{steinberg}, there is a correspondence between generalized
	conjugacy classes of $\PP_k$ and conjugacy classes of the maximal subgroups $G_{e_i}$ for $1\leq i\leq k$.
	More precisely, each conjugacy class $[g]$ of $G_{e_i}$ is contained in a
	corresponding generalized conjugacy class of $\PP_k$.

	The elements $\{\sigma_\mu:\mu\vdash i\}$ form a set of conjugacy class representatives for each $S_i$.
	Via the correspondence in Proposition~\ref{prop:maximal_subgps_of_PP}, these representatives correspond
	to the elements $\{d_\mu:\mu\vdash i\}$.
\end{proof}

\begin{example}
	In $\PP_9$, the generalized conjugacy class representative $d_{(3,2,2)}$ is given diagrammatically by \[
	\begin{tikzpicture}[xscale=.5,yscale=.5,line width=1.25pt] 
                \foreach \i in {1,...,9}  { \path (\i,1.25) coordinate (T\i); \path(\i,1.8) node {$\i$}; \path (\i,.25) coordinate (B\i); \path(\i,-.3) node {$\ov{\i}$}; } 
                \filldraw[fill= black!12,draw=black!12,line width=4pt]  (T1) -- (T9) -- (B9) -- (B1) -- (T1);
                \draw[black] (T1)--(B3);
                \draw[black] (T2)--(B1);
                \draw[black] (T3)--(B2);
                \draw[black] (T4)--(B5);
                \draw[black] (T5)--(B4);
                \draw[black] (T6)--(B7)--(B9);
                \draw[black] (B6)--(T7)--(T9);
                \foreach \i in {1,...,9}  {\filldraw[fill=black,draw=black,line width = 1pt] (T\i) circle (4pt);
                			     \filldraw[fill=black,draw=black,line width = 1pt] (B\i) circle (4pt);}
       \end{tikzpicture}\]
\end{example}

\begin{lemma}\label{lem:bitrace_computation}
	Let $\alpha\vdash j\leq n$ and $\beta\vdash i\leq k$ and let $\sigma\in R_n$ have cycle type $\alpha$.
	The bitrace of $(\sigma,d_\beta)\in R_n\times\PP_k$ acting on $\left(\C^n\right)^{\otimes k}$ is \[p_\beta[\Xi_\alpha].\]
\end{lemma}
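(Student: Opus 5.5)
The bitrace is $\sum_{\mathbf i} \langle e_{\mathbf i}^*, \sigma d_\beta\, e_{\mathbf i}\rangle$, where $\mathbf i=(i_1,\ldots,i_k)$ runs over $[n]^k$ and $e_{\mathbf i}=e_{i_1}\otimes\cdots\otimes e_{i_k}$. The plan is to count the basis tensors fixed by the composite and show this count factors over the cycles of $\beta$.

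First compute $d_\beta e_{\mathbf i}$. By the description of the action, $d_\beta e_{\mathbf i}$ is zero unless the indices under each bottom block agree. The bottom blocks of $d_\beta=\sigma_\beta[k]$ are $\{\ov1\},\ldots,\{\ov{i-1}\}$ and $\{\ov i,\ldots,\ov k\}$, so we need $i_i=i_{i+1}=\cdots=i_k$. Call the common value $a$. In that case $d_\beta e_{\mathbf i}$ is the basis tensor whose position $\sigma_\beta(m)$ carries $e_{i_m}$ for $m<i$, and whose positions in the top block of $\ov i$ all carry $e_a$. That top block is $\{\sigma_\beta(i)\}$ together with $\{i+1,\ldots,k\}$, the latter appended to the block containing $i$. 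Since $\sigma_\beta(i)\le i$ and the positions $>i$ receive $e_a$, the output is the basis tensor $e_{\mathbf j}$ with $j_{\sigma_\beta(m)}=i_m$ for $m\le i$ and $j_m=a$ for $m>i$. (One should double-check that when $i\ne\sigma_\beta(i)$ the block containing $i$ on top is $\{i,\sigma_\beta^{-1}(i)\}$'s partner; in any case the map is $e_{\mathbf i}\mapsto e_{\mathbf i\circ\sigma_\beta^{-1}}$ on the first $i$ coordinates, with the tail forced equal to the entry in the $i$th slot.)

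Next apply the rook matrix $\sigma$ coordinatewise: $\sigma e_c=e_{\sigma(c)}$ if $c\in D$ and $0$ otherwise. The diagonal coefficient of $\sigma d_\beta$ at $e_{\mathbf i}$ is therefore $1$ exactly when three conditions hold: $i_i=\cdots=i_k$, every index lies in the domain of $\sigma$, and $\sigma(i_{\sigma_\beta^{-1}(m)})=i_m$ for all $m\le i$ (positions $>i$ then match automatically, since they equal position $i$). Otherwise the coefficient is $0$. So the bitrace counts functions $f:[i]\to[n]$ with $\sigma\circ f=f\circ\sigma_\beta^{-1}$, the tail coordinates being determined by $f(i)$.

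Finally, count these functions cycle by cycle. Such an $f$ is equivariant from $\langle\sigma_\beta^{-1}\rangle$ acting on $[i]$ to the partial permutation $\sigma$. On a cycle of $\sigma_\beta$ of length $\beta_r$, $f$ is determined by the image $c$ of one point, and $c$ must satisfy $\sigma^{\beta_r}(c)=c$ with all intermediate iterates defined. Such $c$ are exactly the points on cycles of $\sigma$ (the cycle part, of type $\alpha$) whose length divides $\beta_r$; points on chains of $\sigma$ never return. Their number is $\sum_{d\mid\beta_r} d\,m_d(\alpha)=p_{\beta_r}[\Xi_\alpha]$, using the computation in the proof of Lemma~\ref{lem:evaluation_of_pbar}. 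Multiplying over the parts of $\beta$ gives $p_\beta[\Xi_\alpha]$.

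The main obstacle is the first step: pinning down the exact form of $d_\beta$ acting on tensors, in particular confirming that the merged blocks force the tail coordinates to be constant and introduce no extra constraint. I expect this to reduce to the fact that $\sigma_\beta$ fixes the block structure at $i$.
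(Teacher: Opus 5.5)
Your argument is correct in substance and reaches the result by a different route from the paper's. Your first description of $d_\beta$ needs fixing, though.

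By the construction of $\sigma_\beta[k]$, the top vertices $i+1,\ldots,k$ are added to the block of the \emph{top} vertex $i$, which is $\{i,\ov{\sigma_\beta^{-1}(i)}\}$. The bottom vertices $\ov{i+1},\ldots,\ov k$ are added to the block $\{\sigma_\beta(i),\ov i\}$. These two blocks coincide only when $\sigma_\beta(i)=i$, i.e.\ when the last part of $\beta$ is $1$. So the output's tail copies the output's $i$th entry $i_{\sigma_\beta^{-1}(i)}$, not $a=i_i$.

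For example, with $\beta=(2)$ and $k=3$ one gets $d_{(2)}(e_x\otimes e_a\otimes e_a)=e_a\otimes e_x\otimes e_x$, not $e_a\otimes e_x\otimes e_a$. Your first description would add the constraint $\sigma(a)=a$ to the fixed-tensor condition. The count would then drop to $m_1(\alpha)$ instead of $m_1(\alpha)+2m_2(\alpha)=p_2[\Xi_\alpha]$.

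Your parenthetical version (output tail equals the output's $i$th entry) is the correct one. It is exactly what your later claim that positions $>i$ ``match automatically'' requires. The fact behind it is not that $\sigma_\beta$ fixes $i$ (it need not). It is the paper's observation: $d_\beta$ kills every basis tensor outside the span $W$ of basis tensors whose last $k-i+1$ factors coincide, and its image lies in $W$. A minor slip as well: the equivariance you actually derive is $\sigma\circ f=f\circ\sigma_\beta$, not $f\circ\sigma_\beta^{-1}$. This is harmless, since both permutations have cycle type $\beta$.

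With that repaired, here is how the two routes compare. The paper makes the same reduction: the trace of $\sigma d_\beta$ equals its trace on $W\cong(\C^n)^{\otimes i}$, where $d_\beta$ becomes a permutation of tensor factors of type $\beta$. It then cites Schur's bitrace formula for $GL_n\times S_i$ and reads off $p_\beta$ at the eigenvalues of $\sigma$. Two points are left tacit there: zero eigenvalues contribute nothing, and applying the formula to the generally singular $\sigma$ is legitimate because both sides are polynomial in the matrix entries.

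You instead count fixed basis tensors and factor the count over the cycles of $\sigma_\beta$, effectively reproving Schur's formula for partial permutation matrices. This makes the argument self-contained and avoids extending Schur's formula beyond $GL_n$. It also shows combinatorially why the chains (nilpotent part) of $\sigma$ contribute nothing. The cost is the tensor bookkeeping for $d_\beta$, which is where your slip occurred. The paper's route is shorter and more conceptual.
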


\begin{proof}
	Let $W\subseteq \left(\C^n\right)^{\otimes k}$ be the subspace spanned by basis elements whose last $k-i+1$ tensor factors are the same.
	That is, basis elements of the form \[e_{m_1}\otimes e_{m_2}\otimes\cdots\otimes e_{m_i}\otimes e_{m_i}\otimes \cdots \otimes e_{m_i}.\]
	Note that $d_\beta$ sends basis elements outside of $W$ to zero and the image of $d_\beta$ is contained in $W$.
	Hence, the bitrace of $(\sigma,d_\beta)$ acting on $\left(\C^n\right)^{\otimes k}$ is equal to the bitrace of $(\sigma,\ov{d}_\beta)$ on $W\cong\left(\C^n\right)^{\otimes i}$ where $\ov{d}_\beta$ is the result of restricting $d_\beta$ to entries at most $i$. Note that $\ov{d}_\beta$ is a permutation diagram for a permutation in $S_i$ with cycle type $\beta$.
	
	Schur proved that the bitrace of $(g,\gamma)\in GL_n\times S_i$ acting on $\left(\C^n\right)^{\otimes i}$ is $p_\mu(x_1,x_2,$ $\ldots,x_n)$ where $\mu$ is the cycle type of $\gamma$ and $x_1,x_2,\ldots,x_n$ are the eigenvalues of $g$.
	Specializing this formula to $(\sigma,\ov{d}_\beta)$ yields the evaluation of $p_\beta$ at the eigenvalues of $\sigma$ as an $n\times n$ matrix. That is, $p_\beta[\Xi_\alpha]$.
\end{proof}

\begin{theorem} \label{thm:power_sum_character}
Let $k$ be a positive integer and $\mu$ a partition such that $|\mu|\leq k$. Then,
\[
p_\mu = \sum_{\lambda : 1 \leq |\lambda| \leq |\mu|} \chi^\lambda_{\PP_k}(\mu) \tilde{x}_\lambda.
\]
\end{theorem}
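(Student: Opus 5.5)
The plan is to compute the bitrace of $(\sigma,d_\mu)$ on $(\C^n)^{\otimes k}$ in two ways and then conclude with Lemma \ref{lemma:evaluation}. Fix $n\geq k$ large and let $\sigma\in R_n$ have cycle type $\alpha$ with $|\alpha|\leq n$.

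First, by Lemma \ref{lem:bitrace_computation}, the bitrace equals $p_\mu[\Xi_\alpha]$.

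Second, the Schur--Weyl decomposition \eqref{eq:SW_decomp} expresses the bitrace as
\[
\sum_{\lambda} \chi^\lambda_{R_n}(\alpha)\,\chi^\lambda_{\PP_k}(\mu),
\]
where $\lambda$ runs over the partitions indexing irreducibles occurring in $(\C^n)^{\otimes k}$, namely $1\le|\lambda|\le k$ (and $|\lambda|=0$ when $k=0$). By \eqref{eq:evaluating_xt}, $\chi^\lambda_{R_n}(\alpha)=\xt_\lambda[\Xi_\alpha]$. Hence for every $\alpha$ with $|\alpha|\le n$ we obtain
\[
p_\mu[\Xi_\alpha]=\Big(\sum_{1\le|\lambda|\le k}\chi^\lambda_{\PP_k}(\mu)\,\xt_\lambda\Big)[\Xi_\alpha].
\]
Both sides have degree at most $k\le n$, so Lemma \ref{lemma:evaluation} turns this into an identity of symmetric functions: $p_\mu$ equals the sum over all $1\le|\lambda|\le k$.

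It remains to cut the range of summation down to $|\lambda|\le|\mu|$. Since $\{\xt_\lambda\}$ is a basis and $\xt_\lambda$ has top-degree component $s_\lambda$, an upper-triangularity argument in degree works. The top homogeneous component of the right-hand side is $\sum_{|\lambda|=k}\chi^\lambda_{\PP_k}(\mu)s_\lambda$. This must vanish whenever $k>|\mu|$, which forces those coefficients to be $0$, and we then descend degree by degree. At each degree $d>|\mu|$, the degree-$d$ part of the remaining sum is $\sum_{|\lambda|=d}\chi^\lambda_{\PP_k}(\mu)s_\lambda$ plus contributions from higher $\lambda$, which are already zero. Independence of the Schur functions then gives $\chi^\lambda_{\PP_k}(\mu)=0$ for $|\lambda|>|\mu|$.

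Equivalently, uniqueness of the $\xt$-expansion suffices: $p_\mu$ has degree $|\mu|$, and its expansion in the $\xt$ basis only involves $|\lambda|\le|\mu|$ by triangularity.

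The main obstacle is bookkeeping in the decomposition \eqref{eq:SW_decomp}. We must verify that the indexing set of $\PP_k$-irreducibles is exactly $1\le|\lambda|\le k$, and that the multiplicity-free pairing really yields the bitrace as the sum of products of characters. The $R_n$-side decomposition from \cite{solomon2002rep}, together with the double centralizer theorem (valid for $n\ge k$), should supply this. We also need the monoid characters to be traces on the given modules, which holds by definition.
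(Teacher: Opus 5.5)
Your proposal is correct and follows the paper's proof. You compute the bitrace of $(\sigma,d_\mu)$ on $(\C^n)^{\otimes k}$ once via Lemma \ref{lem:bitrace_computation} and once via \eqref{eq:SW_decomp}, replace $\chi^\lambda_{R_n}(\alpha)$ by $\xt_\lambda[\Xi_\alpha]$, and conclude with Lemma \ref{lemma:evaluation}; you use the ``all $|\alpha|\le n$'' form of that lemma with one fixed $n\ge k$, while the paper uses the ``all large $|\alpha|$'' form, and either works. The only real variation is how the sum is trimmed to $|\lambda|\le|\mu|$: the paper specializes to $k=|\mu|$ and implicitly uses uniqueness of the $\xt$-expansion, which also shows the coefficients do not depend on $k$, whereas your degree-triangularity argument from $\xt_\lambda = s_\lambda + (\text{lower degree})$ is an equally valid and somewhat more explicit justification.
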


\begin{proof}
	Let $\alpha\vdash n\geq k$.
	We can compute the bitrace of an element $(\sigma,d_\beta)\in R_n\times \PP_k$ where $\sigma$ has cycle type $\alpha$ acting on $\left(\C^n\right)^{\otimes k}$ in two ways:
	we can compute it directly using Lemma~\ref{lem:bitrace_computation} as $p_\mu[\Xi_\alpha]$, and we can compute it using the decomposition of the space as an $R_n\times \PP_k$-bimodule in Equation~\eqref{eq:SW_decomp}.
	Equating these two computations yields
	\begin{align*}
		p_\mu[\Xi_\alpha]=\sum_{\lambda:1\leq |\lambda|\leq k} \chi_{R_n}^\lambda(\alpha)\chi_{\PP_k}^\lambda(\mu).
	\end{align*}
	
	By Equation
	~\eqref{eq:evaluating_xt},
	
	\begin{align*}
		p_\mu[\Xi_\alpha]&=\sum_{\lambda:1\leq |\lambda|\leq k} \tilde{x}_\lambda[\Xi_\alpha]\chi_{\PP_k}^\lambda(\mu)\\
				   &=\left(\sum_{\lambda:1\leq |\lambda|\leq k} \chi_{\PP_k}^\lambda(\mu)\tilde{x}_\lambda\right)[\Xi_\alpha]~.
	\end{align*}
	
	Because these two symmetric functions agree evaluated at $\Xi_\alpha$ for all partitions $\alpha$ such that $|\alpha|\geq k$ and their degrees are bounded by $k$, we have by Lemma \ref{lemma:evaluation} that they agree as elements of $\Lambda$.
	
	Finally, choosing $k=|\mu|$, we see that the only $\lambda$ in the sum that contribute for any $k$ are those for which $|\lambda|\leq|\mu|$.
\end{proof}

\subsection{Decomposition Matrices}

Let $C$ be the character table of $\PP_k$ with rows and columns
indexed by partitions in graded lexicographic order i.e.
\[\emptyset, (1), (2), (1,1), (3), \ldots, (1^k).\]

\begin{example}\label{ex:PPk_char_table}
The character table for $\PP_3$ is given below.
\[\left[\begin{array}{rrrrrrr}
1 & 0 & 0 & 0 & 0 & 0 & 0 \\
0 & 1 & 1 & 1 & 1 & 1 & 1 \\
0 & 0 & 1 & 1 & 0 & 1 & 3 \\
0 & 0 & -1 & 1 & 0 & 1 & 3 \\
0 & 0 & 0 & 0 & 1 & 1 & 1 \\
0 & 0 & 0 & 0 & -1 & 0 & 2 \\
0 & 0 & 0 & 0 & 1 & -1 & 1
\end{array}\right]\]
\end{example}

Let $Y$ be the block diagonal matrix of the same dimensions
whose diagonal blocks are the character tables of $S_i$
for $0\leq i\leq k$.
The character table $C$ factors as
\[C=LY~~~~~\hbox{and}~~~~~C=YR\]
for unique upper unitriangular matrices $L$ and $R$.
Solomon \cite{solomon2002rep} exploited this fact to provide his formulas for the character
table of the rook monoid, and Steinberg \cite{steinberg_mobius} extended the method to the 
more general setting of finite inverse semigroups.
The matrix $L$ is called the decomposition matrix of $\PP_k$
because it records how an irreducible representation of
$\PP_k$ decomposes upon restriction to a maximal subgroup (see \cite[Theorem 6.5]{steinberg}).

\begin{example}
The character table in Example~\ref{ex:PPk_char_table} for $\PP_3$ factors in the following two ways
as $LY$ and $YR$ respectively.
	\begin{align*}
\left[\begin{array}{rrrrrrr}
1 & 0 & 0 & 0 & 0 & 0 & 0 \\
0 & 1 & 1 & 0 & 1 & 0 & 0 \\
0 & 0 & 1 & 0 & 1 & 1 & 0 \\
0 & 0 & 0 & 1 & 1 & 1 & 0 \\
0 & 0 & 0 & 0 & 1 & 0 & 0 \\
0 & 0 & 0 & 0 & 0 & 1 & 0 \\
0 & 0 & 0 & 0 & 0 & 0 & 1
\end{array}\right]&
\left[\begin{array}{rrrrrrr}
1 & 0 & 0 & 0 & 0 & 0 & 0 \\
 0 & 1 & 0 & 0 & 0 & 0 & 0 \\
 0 & 0 & 1 & 1 & 0 & 0 & 0 \\
0 & 0 & -1 & 1 & 0 & 0 & 0 \\
 0 & 0 & 0 & 0 & 1 & 1 & 1 \\
0 & 0 & 0 & 0 & -1 & 0 & 2 \\
0 & 0 & 0 & 0 & 1 & -1 & 1
\end{array}\right]\\
\left[\begin{array}{rrrrrrr}
1 & 0 & 0 & 0 & 0 & 0 & 0 \\
 0 & 1 & 0 & 0 & 0 & 0 & 0 \\
 0 & 0 & 1 & 1 & 0 & 0 & 0 \\
0 & 0 & -1 & 1 & 0 & 0 & 0 \\
 0 & 0 & 0 & 0 & 1 & 1 & 1 \\
0 & 0 & 0 & 0 & -1 & 0 & 2 \\
0 & 0 & 0 & 0 & 1 & -1 & 1
\end{array}\right]&
\left[\begin{array}{rrrrrrr}
1 & 0 & 0 & 0 & 0 & 0 & 0 \\
0 & 1 & 1 & 1 & 1 & 1 & 1 \\
0 & 0 & 1 & 0 & 0 & 0 & 0 \\
0 & 0 & 0 & 1 & 0 & 1 & 3 \\
0 & 0 & 0 & 0 & 1 & 0 & 0 \\
0 & 0 & 0 & 0 & 0 & 1 & 0 \\
0 & 0 & 0 & 0 & 0 & 0 & 1
\end{array}\right]
	\end{align*}
\end{example}

In the next section, we will give plethystic formulations for the entries in the matrices $L$ and $R$
as well as a plethystic description of the irreducible characters of the propagating partition monoid.

\section{Rook characters and plethystic notation}
\label{sec:pleth_notation}

In this section we prove a number of symmetric function expressions involving the rook character basis.
Since the proofs
we provide here require additional notation and identities, we introduce the required
formulae in this section.
In addition to the standard notation for symmetric functions, we will also use plethystic notation
with multiple alphabets, which allows us to encode Hopf algebra operations
on symmetric functions compactly.

For a countable alphabet $\{ x_1, x_2, x_3, \ldots \}$, we use the notation $X$ to represent the sum
$x_1 + x_2 + x_3 + \cdots$.
Then for $f \in \Lambda$, and an expression $E(x_1, x_2, x_3, \ldots)$ in these variables,
$f[E]$ represents the replacement of each
instance of $p_k$ in $f$ by $E(x_1^k, x_2^k, x_3^k, \ldots)$.
We note that the expression $f[g[X]]$
coincides with the plethysm $f[g][X]$ defined in Section~\ref{sec:sf}.
If a second set of variables $\{y_1,y_2,y_3,\ldots\}$ is needed,
we will set $Y := y_1 + y_2 + y_3 + \cdots$ and we can work with
the sum $X+Y$ and product $XY = \sum_{i,j} x_i y_j$ of alphabets.

We will express the structure coefficients of the $\xt_\lambda$ basis in terms of
the Littlewood-Richardson coefficients $c^\lambda_{\mu\nu}$ and the Kronecker coefficients $g_{\lambda\mu\nu}$,
which appear in the symmetric function expressions:
\begin{equation}\label{eq:Schur_coproducts}
c^\lambda_{\nu\mu} = \left< s_\nu[X]s_\mu[Y], s_\lambda[X+Y] \right>~~~~~\hbox{and}~~~~~
g_{\lambda\mu\nu} = \left< s_\nu[X]s_\mu[Y], s_\lambda[XY] \right>~.
\end{equation}

The dual Pieri rules are expressed by taking the value $1$ and
adding it to or (virtually) removing it from the alphabet $X$.
\begin{equation}\label{eq:pieri_duals}
s_\lambda[X + 1] = \sum_{\mu : \lambda / \mu \in \mathcal{H}} s_\mu[X]
~~~~~\hbox{and}~~~~~
s_\lambda[X - 1] = \sum_{\mu : \lambda / \mu \in \mathcal{V}} (-1)^{|\lambda/\mu|} s_\mu[X]~.
\end{equation}

The Cauchy element is the series of symmetric functions
\begin{equation}\label{eq:cauchy}
\Omega[X] = \prod_{i} \frac{1}{1-x_i} = 1 + \sum_{k\geq1} s_{(k)}[X]~.
\end{equation}
This element of the completion of the symmetric functions has the property that
\begin{equation}\label{eq:Cauchy_properties}
\Omega[X+Y] = \Omega[X]\Omega[Y]\qquad\hbox{and}\qquad\Omega[XY] = \sum_{\lambda} s_\lambda[X] s_\lambda[Y]~.
\end{equation}

The Pieri rules for Schur functions imply that
for any symmetric functions $f, g \in \Lambda$,
\begin{equation}\label{eq:Cauchy_dual}
\left< f[X+1], g[X] \right> = \left< f[X], g[X] \Omega[X] \right>~.
\end{equation}

Given these identities, we are now prepared to give the
following additional property of the $\xt$-basis.
Assume that $\{ \xt_\lambda  \}$ is the basis which
evaluates to the rook characters
(that is, satisfies Equation \eqref{eq:frob_image_xt}).

\begin{theorem} \label{thm:dual_basis}
For partitions $\lambda$ and $\mu$,
\[
\left< \xt_\lambda, s_\mu[\Omega-1] \right> = \delta_{\lambda\mu}.
\]
That is, the elements $\{ \xt_\lambda \}$ are dual to the elements
$\{ s_\mu[\Omega-1] \}$ (which are elements in the completion of the symmetric functions).
\end{theorem}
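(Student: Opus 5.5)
The plan is to use the Frobenius image identity \eqref{eq:frob_image_xt} together with the Scharf--Thibon duality \eqref{eq:scharfthibon}, and to pass from $\Omega$ to $\Omega-1$ using \eqref{eq:Cauchy_dual}.

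\emph{Step 1: pair with $s_\mu[\Omega]$.} Fix $n \geq |\lambda|$ and let $\mu \vdash n$. Since $\xt_\lambda$ has bounded degree $|\lambda|$, \eqref{eq:scharfthibon} and \eqref{eq:frob_image_xt} give
\[
\left< \xt_\lambda, s_\mu[\Omega] \right> = \left< \phi_n(\xt_\lambda), s_\mu \right> = \left< h_{n-|\lambda|} s_\lambda, s_\mu \right>.
\]
By the Pieri rule this equals $1$ if $\mu/\lambda \in \mathcal{H}$ and $0$ otherwise. If $n<|\lambda|$, the right side vanishes because $h_{n-|\lambda|}=0$.

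\emph{Step 2: relate $s_\mu[\Omega]$ to $s_\mu[\Omega-1]$.} By the dual Pieri rule \eqref{eq:pieri_duals}, applied with the alphabet $X$ replaced by $\Omega-1$,
\[
s_\mu[\Omega] = s_\mu[(\Omega-1)+1] = \sum_{\nu:\mu/\nu\in\mathcal{H}} s_\nu[\Omega-1].
\]
This is a unitriangular relation with respect to size. Writing $A_{\mu\nu} = [\mu/\nu\in\mathcal{H}]$, define $G_{\lambda\nu} := \left< \xt_\lambda, s_\nu[\Omega-1] \right>$. Step 1 then reads
\[
\sum_\nu A_{\mu\nu} G_{\lambda\nu} = A_{\mu\lambda}
\]
for all $\mu$. (Here $A_{\mu\lambda}=1$ exactly when $\mu/\lambda\in\mathcal{H}$, which matches the Pieri coefficient from Step 1.)

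\emph{Step 3: invert.} The matrix $A$ is unitriangular: $A_{\mu\nu}\neq 0$ forces $|\nu|\le|\mu|$, and $A_{\mu\mu}=1$. Restricted to partitions of size at most $N$, it is a finite invertible matrix. Hence the equation $AG_\lambda^T = A e_\lambda$ forces $G_{\lambda\nu}=\delta_{\lambda\nu}$ for all $|\nu|\le N$, and $N$ is arbitrary.

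Alternatively, and perhaps more cleanly, one can work directly: expand $s_\mu[\Omega-1]$ via \eqref{eq:pieri_duals} using the vertical-strip signed sum applied to the alphabet $\Omega$. Then
\[
\left< \xt_\lambda, s_\mu[\Omega-1]\right> = \sum_{\nu:\mu/\nu\in\mathcal{V}} (-1)^{|\mu/\nu|} \left< s_\lambda h_{|\nu|-|\lambda|}, s_\nu \right>,
\]
and one would have to show that this alternating sum collapses to $\delta_{\lambda\mu}$. The triangular inversion argument of Step 3 avoids this computation, so I would use it.

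The main obstacle is justifying that the identities in Steps 1 and 2 are legitimate in the completion. Specifically, one must check that plethysm by $\Omega-1$ makes sense: it has no constant term, so $s_\nu[\Omega-1]$ has lowest degree $|\nu|$. One must also check that pairing the bounded-degree element $\xt_\lambda$ against these series only involves finitely many nonzero terms in each degree. For the latter, since $\xt_\lambda$ has degree at most $|\lambda|$, only the truncation of each series to degree at most $|\lambda|$ matters. Consequently only $\nu$ with $|\nu|\le|\lambda|$ contribute to $G_{\lambda\nu}$ nontrivially, and the sums in Step 2 are finite after truncation.
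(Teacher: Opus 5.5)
Your proof is correct, but it finishes by a different route than the paper.

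\textbf{The paper's route.} The paper never inverts a matrix. It starts from $\langle s_\lambda, s_\mu\rangle$ and writes $s_\mu=s_\mu[(X-1)+1]$. It then moves the $+1$ across the pairing with the adjointness \eqref{eq:Cauchy_dual}, obtaining $\langle s_\lambda\Omega, s_\mu[X-1]\rangle$. Next it expands $s_\mu[X-1]$ by the signed vertical-strip rule in \eqref{eq:pieri_duals}. Each term $\langle s_\lambda\Omega, s_\gamma\rangle=\langle h_{|\gamma|-|\lambda|}s_\lambda, s_\gamma\rangle$ is identified with $\langle \xt_\lambda, s_\gamma[\Omega]\rangle$ via \eqref{eq:frob_image_xt} and \eqref{eq:scharfthibon}. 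Finally it resums to $\langle \xt_\lambda, s_\mu[\Omega-1]\rangle$.

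So the paper's proof is exactly the ``alternative'' you set aside. The alternating sum you thought would need work collapses to $\delta_{\lambda\mu}$ in one line, because it equals $\langle s_\lambda\Omega, s_\mu[X-1]\rangle=\langle s_\lambda,s_\mu\rangle$ by \eqref{eq:Cauchy_dual}.

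\textbf{Your route.} You use the horizontal-strip expansion $s_\mu[\Omega]=\sum_{\mu/\nu\in\mathcal{H}}s_\nu[\Omega-1]$ followed by a unitriangular inversion. Your Steps 1--2 are essentially the computation in the paper's proof of Proposition~\ref{prop:dual_basis}, run in the opposite direction. The inversion is sound: row $\mu$ of $A$ involves only columns with $|\nu|\le|\mu|$, so the truncation to sizes at most $N$ is a closed finite unitriangular system.

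\textbf{What each buys.} Your version avoids \eqref{eq:Cauchy_dual} entirely. Its structure $AG_\lambda^T=Ae_\lambda$ also makes explicit why Theorem~\ref{thm:dual_basis} and Proposition~\ref{prop:dual_basis} are two faces of one uniqueness statement. The paper's version is a single closed-form chain of equalities with no triangularity argument.
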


\begin{proof}
Let $\lambda \vdash n$, and $\mu \vdash m$.  We use Equations \eqref{eq:Cauchy_dual}, \eqref{eq:pieri_duals},
\eqref{eq:frob_image_xt} and \eqref{eq:scharfthibon} to compute
\begin{align*}
\left< s_\lambda, s_\mu \right> &= \left< s_\lambda[X], s_\mu[X+1-1] \right>\\
&= \left< s_\lambda[X] \Omega[X], s_\mu[X-1] \right>\\
&= \sum_{\mu/\gamma \in \mathcal{V}} (-1)^{|\mu|-|\gamma|}
\left< \phi_{|\gamma|}( \xt_\lambda ), s_{\gamma}[X] \right>\\
&= \sum_{\mu/\gamma \in \mathcal{V}} (-1)^{|\mu|-|\gamma|}
\left< \xt_\lambda, s_\gamma[\Omega] \right>\\
&= \left< \xt_\lambda, s_\mu[ \Omega - 1] \right>~.\qedhere
\end{align*}
\end{proof}

The elements $s_\lambda[\Omega-1]$ are equal to $s_\lambda$ plus terms which
are of larger homogeneous degree.  We can order the partitions $\lambda <_* \mu$
if $|\lambda| < |\mu|$ or $|\lambda|=|\mu|$ and $\lambda <_{lex} \mu$
so that there is an inhomogeneous basis
$\{ \tilde{y}_\lambda \}$ indexed by partitions where
$\tilde{y}_\lambda = s_\lambda$ plus terms which are
of smaller homogeneous degree which has the property that
$\left< \tilde{y}_\lambda, s_\mu[\Omega-1] \right> = \delta_{\lambda\mu}$.
This is the graded dual basis to $\{ s_\lambda[\Omega-1] \}$
and the next proposition shows that the evaluations of $\tilde{y}_\lambda$
are rook characters.

\begin{prop} \label{prop:dual_basis}
Let $\{ \tilde{y}_\lambda \}$ represent the graded dual basis
to $\{s_\lambda[\Omega-1] \}$ with respect to the scalar product,
then $\phi_n( \tilde{y}_\lambda ) = h_{n - |\lambda|} s_\lambda$.
\end{prop}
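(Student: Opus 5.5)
The plan is to reverse the chain of equalities in the proof of Theorem \ref{thm:dual_basis}. We compute $\langle \phi_n(\tilde y_\lambda), s_\nu\rangle$ for every $\nu\vdash n$ and show it equals $\langle h_{n-|\lambda|}s_\lambda, s_\nu\rangle$. Since $\tilde y_\lambda$ has bounded degree ($|\lambda|$) and $s_\nu$ is homogeneous of degree $n$, Equation \eqref{eq:scharfthibon} applies and gives
\[
\left<\phi_n(\tilde y_\lambda), s_\nu\right> = \left<\tilde y_\lambda, s_\nu[\Omega]\right>.
\]
So the whole task reduces to expanding $s_\nu[\Omega]$ in the family $\{s_\gamma[\Omega-1]\}$ and then applying the duality that defines $\tilde y_\lambda$.

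For the expansion, write $\Omega = (\Omega-1)+1$. The first identity in \eqref{eq:pieri_duals} is an identity of plethystic alphabets, so we may substitute $X\mapsto \Omega-1$ to get
\[
s_\nu[\Omega] = s_\nu[(\Omega-1)+1] = \sum_{\gamma:\nu/\gamma\in\mathcal{H}} s_\gamma[\Omega-1].
\]
Pairing with $\tilde y_\lambda$ and using $\langle \tilde y_\lambda, s_\gamma[\Omega-1]\rangle=\delta_{\lambda\gamma}$ gives
\[
\left<\tilde y_\lambda, s_\nu[\Omega]\right> = \sum_{\gamma:\nu/\gamma\in\mathcal H}\delta_{\lambda\gamma}.
\]
This is $1$ if $\nu/\lambda$ is a horizontal strip and $0$ otherwise. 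By the Pieri rule, that is exactly $\langle h_{n-|\lambda|}s_\lambda, s_\nu\rangle$, and it is zero when $n<|\lambda|$, which is consistent with the convention $h_{i}=0$ for $i<0$. Since the $s_\nu$ with $\nu\vdash n$ span degree $n$ and $\phi_n$ lands in degree $n$, this proves $\phi_n(\tilde y_\lambda)=h_{n-|\lambda|}s_\lambda$.

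The main obstacle is justifying these manipulations in the completion. The expansion of $s_\nu[\Omega]$ involves infinitely many homogeneous degrees, but the sum over $\gamma$ is finite, so pairing term by term with the bounded-degree element $\tilde y_\lambda$ only involves truncations to degrees at most $|\lambda|$. The substitution into \eqref{eq:pieri_duals} is legitimate because plethysm by $\Omega-1$, which has no constant term, is a well-defined ring homomorphism into the completion. I would also check that the graded dual basis $\tilde y_\lambda$ exists and is well defined as stated, using the unitriangularity of $s_\mu[\Omega-1]=s_\mu+(\text{higher degree})$ with respect to $<_*$. Note that the scalar product is only ever evaluated against the bounded-degree element $\tilde y_\lambda$, so each pairing is finite.
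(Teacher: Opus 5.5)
Your proposal is correct and follows essentially the same route as the paper's proof. Like the paper, you apply the Scharf--Thibon duality \eqref{eq:scharfthibon}, rewrite $s_\nu[\Omega]=s_\nu[(\Omega-1)+1]$ and expand it by the dual Pieri rule \eqref{eq:pieri_duals}, then use $\langle \tilde y_\lambda, s_\gamma[\Omega-1]\rangle=\delta_{\lambda\gamma}$ to conclude that the coefficient is $1$ exactly when $\nu/\lambda$ is a horizontal strip. Your remarks justifying these manipulations in the completion are extra care beyond what the paper writes out.
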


\begin{proof}  Fix an integer $n$ and let $\mu \vdash n$.
Using Equations \eqref{eq:scharfthibon} and \eqref{eq:pieri_duals}, we compute
\begin{align*}
\left< \phi_{n}(\tilde{y}_\lambda), s_\mu \right>
&= \left< \tilde{y}_\lambda, s_\mu[\Omega] \right>\\
&= \left< \tilde{y}_\lambda, s_\mu[\Omega - 1 + 1] \right>\\
&= \sum_{\mu/\nu\in\mathcal{H}} \left< \tilde{y}_\lambda, s_\nu[\Omega - 1] \right>\\
\end{align*}
where the sum on the right hand side is over all partitions such
that $\mu/\nu$ is a horizontal strip.  That is, the sum on the right
hand side will be $1$ if $\mu/\lambda$ is a horizontal strip
and is equal to $0$ otherwise. Hence, the Schur expansion of
$\phi_{n}(\tilde{y}_\lambda)$ is equal to $h_{n-|\lambda|} s_\lambda$.
\end{proof}

\begin{corollary}
\label{corollary:PP_character_plethysm}
Fix an integer $k$ and let $\lambda$ and $\mu$ be partitions such that
$|\lambda|, |\mu| \leq k$, then
\[
\chi^\lambda_{\PP_k}(\mu) = \left< s_\lambda[\Omega-1], p_\mu \right>
\]
and, in particular, if $|\mu|<|\lambda|$ then $\chi^\lambda_{\PP_k}(\mu)=0$.
\end{corollary}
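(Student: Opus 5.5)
The plan is to combine Theorem \ref{thm:power_sum_character} with the duality of Theorem \ref{thm:dual_basis}. By Theorem \ref{thm:power_sum_character}, for $|\mu|\leq k$ we have
\[
p_\mu = \sum_{\lambda : 1\leq |\lambda|\leq |\mu|} \chi^\lambda_{\PP_k}(\mu)\, \xt_\lambda .
\]
Since $p_\mu$ has bounded degree, pairing with the completion element $s_\lambda[\Omega-1]$ is well defined: only the finitely many homogeneous components of degree at most $|\mu|$ contribute. We pair both sides with $s_\lambda[\Omega-1]$ and use $\left<\xt_\nu, s_\lambda[\Omega-1]\right> = \delta_{\nu\lambda}$. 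This extracts the coefficient of $\xt_\lambda$, so
\[
\left< p_\mu, s_\lambda[\Omega-1]\right> = \chi^\lambda_{\PP_k}(\mu)
\]
for every $\lambda$ with $1\leq |\lambda|\leq |\mu|$.

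It remains to handle the indices outside the sum. If $|\lambda|>|\mu|$, the element $s_\lambda[\Omega-1]$ has all homogeneous components of degree at least $|\lambda|$, because $\Omega-1$ has no constant term. Hence $\left<p_\mu, s_\lambda[\Omega-1]\right>=0$. On the other side, $\chi^\lambda_{\PP_k}(\mu)$ does not appear in the expansion. To conclude that it is $0$, we use the uniqueness of the expansion of $p_\mu$ in the $\xt$ basis: apply the identity of Theorem \ref{thm:power_sum_character} with the sum taken over all $|\lambda|\leq k$, as it appears in the proof before the final truncation step. Linear independence of the $\xt_\lambda$ then forces $\chi^\lambda_{\PP_k}(\mu)=0$ whenever $|\lambda|>|\mu|$. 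This also gives the vanishing claim in the statement.

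The case $\lambda=\emptyset$ is handled by convention and direct check. Here $s_\emptyset[\Omega-1]=1$ and $\xt_\emptyset=1$, and the empty-class row of the character table in Example \ref{ex:PPk_char_table} is consistent with $\left<1,p_\mu\right>=\delta_{\mu\emptyset}$.

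The main subtlety I expect is justifying that the pairing with the completion element is legitimate and that the truncated sum in Theorem \ref{thm:power_sum_character} really equals the full Schur--Weyl expansion. That is, the coefficients $\chi^\lambda_{\PP_k}(\mu)$ with $|\mu|<|\lambda|\leq k$ must vanish, rather than merely being dropped from the sum. I would settle this by rereading the proof: the identity holds with $\lambda$ ranging over all $|\lambda|\leq k$, and both sides are expansions of the same degree-$|\mu|$ function in the basis $\{\xt_\lambda\}$. Since $\deg \xt_\lambda=|\lambda|$ and the top-degree parts of the $\xt_\lambda$ are the Schur functions, any nonzero coefficient with $|\lambda|>|\mu|$ would produce a nonzero component of degree greater than $|\mu|$.
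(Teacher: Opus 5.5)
Your proposal is correct and is essentially the paper's proof: pair the expansion of $p_\mu$ from Theorem~\ref{thm:power_sum_character} with $s_\lambda[\Omega-1]$, then extract the coefficient of $\xt_\lambda$ using the duality in Theorem~\ref{thm:dual_basis}. Your extra care with the case $|\lambda|>|\mu|$ makes explicit a point the paper passes over, with one caveat: linear independence alone does not force $\chi^\lambda_{\PP_k}(\mu)=0$ there, and the vanishing actually comes from your degree argument applied to the untruncated identity $p_\mu=\sum_{1\leq|\nu|\leq k}\chi^\nu_{\PP_k}(\mu)\xt_\nu$ (or, more directly, from pairing that identity with $s_\lambda[\Omega-1]$, whose components all have degree at least $|\lambda|>|\mu|$).
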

\begin{proof}
This follows from Theorem \ref{thm:power_sum_character}
and Theorem \ref{thm:dual_basis}, since
\begin{align*}
\left< s_\lambda[\Omega-1], p_\mu \right>
&= \left< s_\lambda[\Omega-1], \sum_{\nu : 1 \leq |\nu| \leq |\mu|}
\chi^\nu_{\PP_k}(\mu) \tilde{x}_\nu \right>\\
&= \chi^\lambda_{\PP_k}(\mu)~.\qedhere
\end{align*}
\end{proof}

We now give formulas for the entries in the matrices $L$ and $R$ in terms of plethysm.

\begin{prop}\label{prop:plethystic_formula_for_factorization}
	The matrices $L$ and $R$ for $\PP_k$ are given by
	\begin{align*}
		L_{\lambda,\mu}&=\left< s_\lambda[\Omega-1], s_\mu \right>\text{, and}\\
		R_{\lambda,\mu}&=\frac{1}{z_\lambda}\left< p_\lambda[\Omega-1], p_\mu \right>~.
	\end{align*}
\end{prop}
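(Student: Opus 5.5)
Throughout, index rows of the character table $C$ by $\lambda$ and columns by $\mu$, so $C_{\lambda,\mu}=\chi^\lambda_{\PP_k}(\mu)$. By Corollary \ref{corollary:PP_character_plethysm},
\[
C_{\lambda,\mu}=\left< s_\lambda[\Omega-1], p_\mu \right>.
\]
The block diagonal matrix $Y$ has entries $Y_{\nu,\mu}=\chi^\nu_{S_{|\nu|}}(\mu)=\langle s_\nu,p_\mu\rangle$ when $|\nu|=|\mu|$, and zero otherwise. Since the factorizations $C=LY$ and $C=YR$ with upper unitriangular $L,R$ are unique, it suffices to show that the two proposed matrices are upper unitriangular and that the products come out to $C$.

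\textbf{The factor $L$.} Expand $s_\lambda[\Omega-1]$ in the Schur basis, truncating in degrees at most $k$:
\[
s_\lambda[\Omega-1]=\sum_\nu \left< s_\lambda[\Omega-1], s_\nu\right> s_\nu .
\]
Pairing with $p_\mu$ and using $\langle s_\nu,p_\mu\rangle=0$ unless $|\nu|=|\mu|$ gives
\[
C_{\lambda,\mu}=\sum_{\nu:\,|\nu|=|\mu|}\left< s_\lambda[\Omega-1], s_\nu\right>\chi^\nu(\mu)=\sum_\nu L_{\lambda,\nu}Y_{\nu,\mu},
\]
so $C=LY$ with the proposed $L$. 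For triangularity, note that $\Omega-1=h_1+h_2+\cdots$ has lowest degree term $h_1=p_1$. Hence $s_\lambda[\Omega-1]=s_\lambda$ plus terms of strictly higher degree, as remarked after Theorem \ref{thm:dual_basis}. Thus $L_{\lambda,\nu}=0$ whenever $|\nu|<|\lambda|$, and on the diagonal block ($|\nu|=|\lambda|$) we get $L_{\lambda,\nu}=\delta_{\lambda\nu}$. In the graded order this is upper unitriangular.

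\textbf{The factor $R$.} Write $s_\lambda=\sum_{\rho\vdash|\lambda|}\chi^\lambda(\rho)\,p_\rho/z_\rho$ and substitute plethystically:
\[
s_\lambda[\Omega-1]=\sum_{\rho\vdash |\lambda|}\chi^\lambda(\rho)\,\frac{p_\rho[\Omega-1]}{z_\rho}.
\]
Pairing with $p_\mu$ gives
\[
C_{\lambda,\mu}=\sum_{\rho:\,|\rho|=|\lambda|} Y_{\lambda,\rho}\,\frac{1}{z_\rho}\left<p_\rho[\Omega-1],p_\mu\right>=\sum_\rho Y_{\lambda,\rho}R_{\rho,\mu},
\]
so $C=YR$ with the proposed $R$.

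For triangularity of $R$, each factor $p_i[\Omega-1]=\sum_{j\ge1}h_j[p_i]$ is $p_i$ plus higher-degree terms. Therefore $p_\rho[\Omega-1]=p_\rho+(\text{higher degree})$, which gives:
\begin{itemize}
\item $R_{\rho,\mu}=0$ when $|\mu|<|\rho|$;
\item $R_{\rho,\mu}=\frac{1}{z_\rho}\langle p_\rho,p_\mu\rangle=\delta_{\rho\mu}$ when $|\mu|=|\rho|$.
\end{itemize}
So $R$ is upper unitriangular.

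\textbf{Finite truncation.} Both $s_\lambda[\Omega-1]$ and $p_\rho[\Omega-1]$ lie in the completion of $\Lambda$, and every pairing above is against a homogeneous element of degree at most $k$. So all computations can be truncated at degree $k$ and every sum is finite. Uniqueness of the unitriangular factors then identifies the proposed matrices with $L$ and $R$.

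The main point needing care is matching the index conventions (rows versus columns, and the graded lexicographic order) so that "upper unitriangular" is read correctly. Given the degree-raising property of $[\Omega-1]$, this is routine.
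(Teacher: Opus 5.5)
Your proof is correct and follows the paper's argument. Rewrite $C_{\lambda,\mu}=\langle s_\lambda[\Omega-1],p_\mu\rangle$ using Corollary~\ref{corollary:PP_character_plethysm}; expand $s_\lambda[\Omega-1]$ in the Schur basis to get $C=LY$, and expand $s_\lambda$ in power sums with linearity of plethysm to get $C=YR$; then invoke uniqueness. Your added check that the proposed matrices are upper unitriangular, via the degree-raising property of $[\Omega-1]$, is a consistency check the paper omits, though uniqueness already follows from the invertibility of the block-diagonal matrix $Y$.
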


\begin{proof}
By linearity of the scalar product and the fact that the Schur basis is self-dual, we compute
\begin{align*}
	C_{\lambda,\mu}=\left< s_\lambda[\Omega-1], p_\mu \right>&= \sum_\alpha \left< s_\lambda[\Omega-1], s_\alpha \right> \left< s_\alpha, p_\mu \right>\\
	&=\sum_\alpha \left< s_\lambda[\Omega-1], s_\alpha \right> Y_{\alpha,\mu}.\\
	\intertext{By uniqueness of the matrix $L$, we have shown the desired formula for $L_{\lambda,\mu}$.
	Further using linearity of the plethysm $f[g]$ in $f$, we compute}
	C_{\lambda,\mu}=\left< s_\lambda[\Omega-1], p_\mu \right>&=\sum_\alpha \left<s_\lambda, p_\alpha \right> \frac{1}{z_\alpha}\left< p_\alpha[\Omega-1], p_\mu \right>\\
	&=\sum_\alpha Y_{\lambda,\alpha}\frac{1}{z_\alpha}\left< p_\alpha[\Omega-1], p_\mu \right>.
\end{align*}
By uniqueness of the matrix $R$, we have shown the desired formula for $R_{\lambda,\mu}$.
\end{proof}

In \cite[Theorem 5.20]{halverson_jacobson}, Halverson and Jacobson give a combinatorial method for
computing $R_{\lambda,\mu}$ as enumerating a certain set of symmetric diagrams in $\PP_k$.
Notice that a similar method for computing $L_{\lambda,\mu}$ would  give a combinatorial
interpretation of the Schur expansion of the plethysm $s_\lambda[\Omega-1]$. This would in turn
lead to a combinatorial interpretation for the decomposition of an irreducible $GL_n$ representation
restricted to the $n\times n$ permutation matrices (the ``restriction problem'').

Applying the duality between $\{\xt_\lambda\}$ and $\{s_\lambda[\Omega-1]\}$ from Theorem~\ref{thm:dual_basis},
the values $L_{\lambda,\mu}$ appear in the change of basis
from the $s_\lambda$ basis to the $\xt_\lambda$ basis.
The following proposition states this relationship precisely.

\begin{prop}
	For any partition $\mu$,
	\[s_\mu=\sum_\lambda L_{\lambda,\mu}\xt_\lambda.\]
\end{prop}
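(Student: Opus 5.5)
The plan is to expand $s_\mu$ in the $\xt$-basis and read off the coefficients using the duality of Theorem~\ref{thm:dual_basis}. Since $\{\xt_\lambda\}$ is a basis of $\Lambda$ whose elements have degree $|\lambda|$, the homogeneous element $s_\mu$ of degree $m=|\mu|$ can be written as a finite sum
\[
s_\mu=\sum_{\lambda : |\lambda|\leq m} a_{\lambda\mu}\,\xt_\lambda .
\]
Finiteness holds because, by the triangularity of Equation~\eqref{eq:xt_to_power}, the top-degree component of $\xt_\lambda$ is $s_\lambda$. The task is then to show that $a_{\lambda\mu}=L_{\lambda,\mu}$.

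To extract the coefficients, I will pair both sides with $s_\nu[\Omega-1]$ under the Hall scalar product. Theorem~\ref{thm:dual_basis} gives $\langle \xt_\lambda, s_\nu[\Omega-1]\rangle=\delta_{\lambda\nu}$, so
\[
a_{\nu\mu}=\langle s_\mu, s_\nu[\Omega-1]\rangle .
\]
By Proposition~\ref{prop:plethystic_formula_for_factorization}, this quantity is exactly $L_{\nu,\mu}$, which gives the claimed expansion.

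The only point needing care is that $s_\nu[\Omega-1]$ lies in the completion of $\Lambda$. The pairing is still well defined and bilinear, because one argument always has bounded degree and only finitely many graded pieces contribute. Exchanging the pairing with the finite sum over $\lambda$ is therefore legitimate.

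An alternative check that avoids the completion uses Corollary~\ref{corollary:PP_character_plethysm} and Theorem~\ref{thm:power_sum_character}. Write $s_\mu=\sum_\alpha \chi^\mu_{S_m}(\alpha)p_\alpha/z_\alpha$ and substitute $p_\alpha=\sum_\lambda C_{\lambda,\alpha}\xt_\lambda$. The coefficient of $\xt_\lambda$ is then $\sum_\alpha C_{\lambda,\alpha}Y_{\mu,\alpha}/z_\alpha$. Using $C=LY$ and the column orthogonality $\sum_\alpha Y_{\beta,\alpha}Y_{\mu,\alpha}/z_\alpha=\delta_{\beta\mu}$ within each block, this reduces to $L_{\lambda,\mu}$.

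I do not expect any real obstacle here. The only thing to watch is consistency of indexing between the rows and columns of $L$ and the roles of $\lambda$ and $\mu$.
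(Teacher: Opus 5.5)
Your main argument is correct and is essentially the paper's. The paper justifies this proposition by the duality $\langle \xt_\lambda, s_\nu[\Omega-1]\rangle=\delta_{\lambda\nu}$ of Theorem~\ref{thm:dual_basis}, which identifies the coefficient of $\xt_\lambda$ in $s_\mu$ as $\langle s_\lambda[\Omega-1], s_\mu\rangle = L_{\lambda,\mu}$ via Proposition~\ref{prop:plethystic_formula_for_factorization}, exactly as you do.

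Your alternative check through Theorem~\ref{thm:power_sum_character} and $C=LY$ is also valid; it only needs $L=CY^{-1}$, not triangularity. One small correction: the identity $\sum_\alpha Y_{\beta,\alpha}Y_{\mu,\alpha}/z_\alpha=\delta_{\beta\mu}$ you use there is the row (first) orthogonality relation, not column orthogonality.
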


\subsection{Plethystic formulas for the structure coefficients} 
Reduced Kronecker coefficients are indexed by three partitions $\alpha$, $\beta$ and $\gamma$,
and are denoted by $\overline{g}_{\alpha\beta}^\gamma$. They arise as the stable values of
the Kronecker coefficients, $g_{(n-|\alpha|,\alpha), (n-|\beta|,\beta),(n-|\gamma|,\gamma)}$
as $n$ tends to infinity. They are an important object of study in algebraic combinatorics
and representation theory.

The last two authors \cite{OZ_character} showed that the reduced Kronecker coefficients
arise naturally as the structure constants of the inhomogeneous irreducible character
basis of the ring of symmetric functions, whose elements specialize to irreducible
characters of the symmetric groups. Pak and Panova \cite{PP2020} showed that reduced
Kronecker coefficients do not satisfy the saturation property. More recently,
Ikenmeyer and Panova \cite{IP2024} proved that every Kronecker coefficient can be
realized as a reduced Kronecker coefficient.

In the following two propositions, we give plethystic formulae for the
structure constants $\overline{g}_{\alpha\beta}^\gamma$ and $k_{\alpha\beta}^\gamma$ as they enlighten the relationship between them.

A formula for the reduced Kronecker coefficient is due to Littlewood
\cite[Theorem IX]{Littlewood}.  For partitions $\alpha, \beta, \gamma$,
\[
\overline{g}_{\alpha\beta}^\gamma = \sum_{\delta,\epsilon,\zeta, \sigma, \rho, \tau}
g_{\delta\epsilon\zeta} c^\alpha_{\delta\sigma\tau} c^\beta_{\epsilon\rho\tau} c^\gamma_{\zeta\rho\sigma}
\]
where sum in that expression is over all partitions, but implicitly
the Littlewood-Richardson coefficients
and Kronecker coefficients are equal to $0$ except when
$|\delta|=|\epsilon|=|\zeta|$, $|\delta|+|\sigma|+|\tau| = |\alpha|$,
$|\epsilon|+|\rho|+|\tau|=|\beta|$, and $|\zeta|+|\rho|+|\sigma|=|\gamma|$.

Translated into plethystic notation, Littlewood's formula takes the following form:

\begin{prop}
\label{prop:pleth_irred_structure}
For partitions $\alpha, \beta, \gamma$ such that $0 \leq |\gamma| \leqslant |\alpha|+|\beta|$,
\[
\overline{g}_{\alpha\beta}^\gamma = \left< s_\alpha[X] s_\beta[Y], s_\gamma[X+Y+XY] \Omega[XY] \right>
\]
\end{prop}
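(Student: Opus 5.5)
The plan is to translate Littlewood's formula term by term into scalar products, using the coproduct identities \eqref{eq:Schur_coproducts} and the Cauchy identity \eqref{eq:Cauchy_properties}. I will expand the right-hand side
\[
\left< s_\alpha[X] s_\beta[Y], s_\gamma[X+Y+XY]\, \Omega[XY] \right>
\]
by treating $X+Y+XY$ as a sum of three alphabets.

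First, apply the coproduct rule for Littlewood–Richardson coefficients twice:
\[
s_\gamma[X+Y+XY] = \sum_{\sigma,\rho,\zeta} c^\gamma_{\zeta\rho\sigma}\, s_\sigma[X]\, s_\rho[Y]\, s_\zeta[XY],
\]
where $c^\gamma_{\zeta\rho\sigma}=\sum_\eta c^\gamma_{\zeta\eta}c^\eta_{\rho\sigma}$ is the iterated coefficient. Second, expand $s_\zeta[XY]=\sum_{\delta,\epsilon} g_{\delta\epsilon\zeta}\, s_\delta[X]s_\epsilon[Y]$ by the Kronecker identity in \eqref{eq:Schur_coproducts}, and $\Omega[XY]=\sum_\tau s_\tau[X]s_\tau[Y]$ by \eqref{eq:Cauchy_properties}. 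The right-hand side then becomes
\[
\sum_{\delta,\epsilon,\zeta,\sigma,\rho,\tau} g_{\delta\epsilon\zeta}\, c^\gamma_{\zeta\rho\sigma}\, \left< s_\alpha[X], s_\delta[X]s_\sigma[X]s_\tau[X]\right> \left< s_\beta[Y], s_\epsilon[Y]s_\rho[Y]s_\tau[Y]\right>.
\]
Here the scalar product on $\Lambda_X\otimes\Lambda_Y$ factors as the product of the Hall scalar products in each alphabet.

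Third, evaluate each factor: $\left< s_\alpha, s_\delta s_\sigma s_\tau\right> = c^\alpha_{\delta\sigma\tau}$ and $\left< s_\beta, s_\epsilon s_\rho s_\tau\right> = c^\beta_{\epsilon\rho\tau}$. This gives exactly Littlewood's sum, and the implicit degree constraints are automatically enforced by homogeneity.

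The main care point is convergence, since $\Omega[XY]$ lies in the completion. Because $s_\alpha[X]s_\beta[Y]$ has bounded bidegree, only finitely many terms of the expansion pair nontrivially, so truncating in degree makes every step legitimate. The hypothesis $|\gamma|\le|\alpha|+|\beta|$ is needed only so that Littlewood's theorem applies; otherwise both sides vanish anyway. A secondary check is that the Kronecker identity $s_\zeta[XY]=\sum g_{\delta\epsilon\zeta}s_\delta[X]s_\epsilon[Y]$ is consistent with the indexing in \eqref{eq:Schur_coproducts}. This holds because $g$ is fully symmetric in its three indices.
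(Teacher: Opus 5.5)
Your proposal is correct and follows essentially the same route as the paper. Both expand $s_\gamma[X+Y+XY]$ by the Littlewood--Richardson coproduct, then $s_\zeta[XY]$ via Kronecker coefficients and $\Omega[XY]$ via the Cauchy identity, and read off Littlewood's sum from the factored scalar product. Your remarks on truncating in the completion and on both sides vanishing when $|\gamma|>|\alpha|+|\beta|$ are accurate; they only make explicit what the paper leaves implicit.
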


\begin{proof} We use the identities \eqref{eq:Schur_coproducts}
and \eqref{eq:Cauchy_properties}
where in our sums below each is implicitly over the partitions that are free.
\begin{align*}
&\left< s_\alpha[X] s_\beta[Y], s_\gamma[X+Y+XY] \Omega[XY] \right>\\
&=\sum c^\gamma_{\zeta\rho\sigma} \left< s_\alpha[X] s_\beta[Y], s_\sigma[X]s_\rho[Y]s_\zeta[XY] \Omega[XY] \right>\\
&= \sum c^\gamma_{\zeta\rho\sigma}
g_{\delta\epsilon\zeta} \left< s_\alpha[X] s_\beta[Y], s_\sigma[X]s_\rho[Y]s_\delta[X]s_\epsilon[Y] \Omega[XY] \right>\\
&= \sum c^\gamma_{\zeta\rho\sigma}
g_{\delta\epsilon\zeta} \left< s_\alpha[X] s_\beta[Y], s_\sigma[X]s_\rho[Y]s_\delta[X]s_\epsilon[Y] s_\tau[X] s_\tau[Y] \right>\\
&=\sum
g_{\delta\epsilon\zeta} c^\alpha_{\delta\sigma\tau} c^\beta_{\epsilon\rho\tau} c^\gamma_{\zeta\rho\sigma}
= \overline{g}_{\alpha\beta}^\gamma~. \qedhere
\end{align*}
\end{proof}


The following identity related to the Heisenberg (smash) product appears in
\cite[Theorem 1]{MS25} as the multiplicity of an irreducible
in the tensor of two irreducible representations
of the rook monoid.
\begin{equation} \label{eq:rook_kronecker}
k_{\alpha\beta}^\gamma = \sum_{\delta,\epsilon,\zeta, \sigma, \rho}
g_{\delta\epsilon\zeta} c^\alpha_{\delta\sigma} c^\beta_{\epsilon\rho} c^\gamma_{\zeta\rho\sigma}
\end{equation}
where (as was the case with Littlewood's formula for the reduced Kronecker coefficients)
the sum is over all partitions $\delta,\epsilon,\zeta, \sigma, \rho$
but the Littlewood-Richard coefficients and Kronecker coefficients vanish unless
$|\delta|=|\epsilon|=|\zeta|$, $|\alpha|=|\delta|+|\sigma|$, $|\beta|=|\epsilon|+|\rho|$
and $|\gamma|=|\zeta|+|\rho|+|\sigma|$.

We noticed that the right hand side also coincides with the same formula in \cite[Lemma 4.2]{Ying22}. Again, we can translate these coefficients into plethystic notation
because it more transparently shows the relationship between
the $\overline{g}_{\alpha\beta}^\gamma$ and $k_{\alpha\beta}^\gamma$ coefficients.

\begin{prop}\label{prop:pleth_rook_structure}
For partitions $\alpha, \beta, \gamma$ such that $\max(|\alpha|, |\beta|) \leq |\gamma| \leqslant |\alpha|+|\beta|$,
\[
k_{\alpha\beta}^\gamma = \left< s_\alpha[X] s_\beta[Y], s_\gamma[X+Y+XY] \right>
\]
\end{prop}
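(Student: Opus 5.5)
The plan is to follow the proof of Proposition \ref{prop:pleth_irred_structure} verbatim, with the factor $\Omega[XY]$ removed. Removing it is exactly what deletes the summation index $\tau$, so Littlewood's formula becomes Equation \eqref{eq:rook_kronecker}. We take Equation \eqref{eq:rook_kronecker} (from \cite[Theorem 1]{MS25}) as the definition-level formula for $k_{\alpha\beta}^\gamma$. The task is then to show that the right-hand side of the proposition equals
\[
\sum g_{\delta\epsilon\zeta}\, c^\alpha_{\delta\sigma}\, c^\beta_{\epsilon\rho}\, c^\gamma_{\zeta\rho\sigma}.
\]

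First, expand $s_\gamma$ on the sum of three alphabets $X$, $Y$, $XY$. Iterating the first identity of \eqref{eq:Schur_coproducts} gives
\[
s_\gamma[X+Y+XY]=\sum c^\gamma_{\zeta\rho\sigma}\, s_\sigma[X]\, s_\rho[Y]\, s_\zeta[XY].
\]
Second, expand $s_\zeta[XY]$ by the second identity in \eqref{eq:Schur_coproducts}:
\[
s_\zeta[XY]=\sum_{\delta,\epsilon} g_{\delta\epsilon\zeta}\, s_\delta[X]\, s_\epsilon[Y].
\]
This uses the symmetry of Kronecker coefficients in their three indices.

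Third, pair with $s_\alpha[X]s_\beta[Y]$. The scalar product on $\Lambda[X]\otimes\Lambda[Y]$ factors, so each term contributes
\[
\langle s_\alpha, s_\sigma s_\delta\rangle\,\langle s_\beta, s_\rho s_\epsilon\rangle
= c^\alpha_{\delta\sigma}\, c^\beta_{\epsilon\rho}.
\]
Summing over all indices gives exactly the right-hand side of \eqref{eq:rook_kronecker}.

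The main point to check is the degree bookkeeping, which is where the hypothesis $\max(|\alpha|,|\beta|)\le|\gamma|\le|\alpha|+|\beta|$ enters. The expansion is a finite sum, since $s_\gamma[X+Y+XY]$ has bounded degree in each alphabet, so there are no completion issues (unlike the $\Omega[XY]$ case). Every surviving term satisfies $|\delta|=|\epsilon|=|\zeta|$, $|\alpha|=|\delta|+|\sigma|$, $|\beta|=|\epsilon|+|\rho|$ and $|\gamma|=|\zeta|+|\rho|+|\sigma|$. Writing $d=|\zeta|$, this forces $|\gamma|=|\alpha|+|\beta|-d$ with $0\le d\le\min(|\alpha|,|\beta|)$. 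Outside the stated range both sides vanish, or one would need the convention used in \eqref{eq:rook_kronecker}. Inside the range the two sums range over exactly the same index sets, so they coincide term by term.

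Therefore $k_{\alpha\beta}^\gamma=\langle s_\alpha[X]s_\beta[Y], s_\gamma[X+Y+XY]\rangle$.
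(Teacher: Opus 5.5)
Your proposal is correct and is essentially the paper's proof. Like the paper, you expand $s_\gamma[X+Y+XY]$ via the triple Littlewood--Richardson coefficients $c^\gamma_{\zeta\rho\sigma}$, expand $s_\zeta[XY]$ via Kronecker coefficients, pair with $s_\alpha[X]s_\beta[Y]$ to get $c^\alpha_{\delta\sigma}c^\beta_{\epsilon\rho}$, and identify the result with Equation \eqref{eq:rook_kronecker}; your remarks on finiteness and degree bookkeeping are harmless additions, since the identity of the two sums does not actually depend on the range hypothesis.
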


\begin{proof} Again, in the following calculation the sum is implicitly over the partitions that are free
and follows from applications of Equations \eqref{eq:Schur_coproducts}
and \eqref{eq:Cauchy_properties}.
\begin{align*}
&\left< s_\alpha[X] s_\beta[Y], s_\gamma[X+Y+XY] \right>\\
&=\sum c^\gamma_{\zeta\rho\sigma} \left< s_\alpha[X] s_\beta[Y], s_\sigma[X]s_\rho[Y]s_\zeta[XY] \right>\\
&= \sum c^\gamma_{\zeta\rho\sigma}
g_{\delta\epsilon\zeta} \left< s_\alpha[X] s_\beta[Y], s_\sigma[X]s_\rho[Y]s_\delta[X]s_\epsilon[Y] \right>\\
&=\sum
g_{\delta\epsilon\zeta} c^\alpha_{\delta\sigma} c^\beta_{\epsilon\rho} c^\gamma_{\zeta\rho\sigma}
= k_{\alpha\beta}^\gamma~.\qedhere
\end{align*}
\end{proof}

We note that Equation \eqref{eq:rook_kronecker} implies that
if $|\alpha|=|\beta|=|\gamma|$, then $k_{\alpha\beta}^\gamma=g_{\alpha\beta\gamma}$.
It also implies that if $|\alpha|+|\beta|=|\gamma|$, then
$k_{\alpha\beta}^\gamma=c_{\alpha\beta}^\gamma$.  In that sense,
we say that the structure coefficients for the characters of the
rook monoid are an interpolation between the Kronecker coefficients
(the smallest degree component)
and the Littlewood-Richardson coefficients (the highest degree component).

Any algebra morphism applied to the $\xt$-basis will
have the same structure coefficients.  To characterize the basis,
we specify that a subset of the elements are a set of generators of the algebra.
This is stated formally in the following theorem.

\begin{theorem}\label{theorem:structure_characterization}
The set $\{ \xt_\lambda \}$ is the unique basis satisfying Proposition \ref{prop:structure}
such that $\xt_{1^r} = e_r$ for all $r \geq 1$.
\end{theorem}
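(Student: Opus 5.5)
Existence is already in hand: Proposition \ref{prop:structure} shows that $\{\xt_\lambda\}$ has structure coefficients $k_{\lambda\mu}^\gamma$, and Corollary \ref{corollary:xtr_equals_er} gives $\xt_{1^r}=e_r$. So the work is uniqueness. Let $\{y_\lambda\}$ be any basis of $\Lambda$ with $y_\lambda y_\mu=\sum_\gamma k_{\lambda\mu}^\gamma y_\gamma$ for all $\lambda,\mu$ and $y_{1^r}=e_r$ for all $r\ge1$. The plan is to show $y_\lambda=\xt_\lambda$ for every $\lambda$.

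First, the linear map $\Psi:\Lambda\to\Lambda$ defined on the basis by $\Psi(\xt_\lambda)=y_\lambda$ is an algebra homomorphism. It is multiplicative because both bases share the structure constants $k_{\lambda\mu}^\gamma$. It is unital because $y_\emptyset$ acts as the identity: the coefficients $k^\gamma_{\emptyset\mu}=\delta_{\gamma\mu}$ are the same for both families, so $y_\emptyset y_\mu=y_\mu$ for all $\mu$, which forces $y_\emptyset=1=\xt_\emptyset$. Second, $\Psi(e_r)=\Psi(\xt_{1^r})=y_{1^r}=e_r$ for all $r\geq1$. Since $\{e_r\}_{r\ge1}$ generates $\Lambda$ as an algebra, any algebra homomorphism fixing all $e_r$ is the identity. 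Hence $\Psi=\mathrm{id}$, and $y_\lambda=\Psi(\xt_\lambda)=\xt_\lambda$ for all $\lambda$.

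The main point needing care is that the structure coefficients genuinely determine the product. In particular, the sum in Proposition \ref{prop:structure} must be finite, with $|\gamma|\le|\lambda|+|\mu|$. This holds because it is a finite decomposition of a character of $R_n$ for $n\ge |\lambda|+|\mu|$, and it is then extended by linearity. One must also check that the $\gamma$ appearing do not depend on the choice of $n$. This follows because the character values $\chi^\lambda_{R_n}(\alpha)$ are independent of $n$ once $n\ge\max\{|\lambda|,|\alpha|\}$, and because Lemma \ref{lemma:evaluation} forces a unique expansion. With this settled, the argument above is complete.
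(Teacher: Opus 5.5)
Your proof is correct, but it takes a different route from the paper's.

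\textbf{The paper's route.} The paper writes $s_\lambda=\det|e_{\lambda'_i-i+j}|=\det|\xt_{1^{\lambda'_i-i+j}}|$ by Jacobi--Trudi and expands the products using the structure coefficients. It then uses that the top-degree coefficients $k^\gamma_{\alpha\beta}$ (those with $|\gamma|=|\alpha|+|\beta|$) are Littlewood--Richardson coefficients. This gives $s_\lambda=\xt_\lambda+(\text{terms in }\xt_\mu \text{ with } |\mu|<|\lambda|)$, and induction on $|\lambda|$ then determines $\xt_\lambda$.

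\textbf{Your route.} You transport the structure instead. Since the structure constants agree, the linear map $\Psi(\xt_\lambda)=y_\lambda$ is a unital algebra endomorphism. It fixes the generators $e_r$, so it is the identity. Your unit check via $k^\gamma_{\emptyset\mu}=\delta_{\gamma\mu}$ is fine; alternatively, $\Psi(1)e_1=\Psi(e_1)=e_1$ in the integral domain $\Lambda$ forces $\Psi(1)=1$.

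\textbf{What each buys.} Your argument is shorter and uses nothing about the particular values of $k^\gamma_{\alpha\beta}$: no Jacobi--Trudi and no identification of the top-degree component with Littlewood--Richardson coefficients. It is really the general fact that a basis of an algebra is determined by its structure constants together with its values on a subfamily that generates the algebra. The paper's argument is constructive. It gives a recursion that computes $\xt_\lambda$ from $s_\lambda$ using only the $k^\gamma_{\alpha\beta}$, and along the way it exhibits the unitriangular relation between $\{s_\lambda\}$ and $\{\xt_\lambda\}$.

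Your final paragraph is harmless but unnecessary. Because $\{\xt_\lambda\}$ is a basis of $\Lambda$, the expansion of $\xt_\lambda\xt_\mu$ is automatically finite and unique, and that is all the definition of $\Psi$ requires.
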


\begin{proof}  We note that since $\xt_{1^r} = e_r$,
then $s_\lambda = \det | \xt_{1^{\lambda_i' -i+ j}} |= \det | e_{{\lambda_i'-i + j}} |$
by the Jacobi-Trudi formula for Schur functions \cite[Equation (3.5)]{Macdonald}.
Since for the highest degree components, the smash product coincides with the usual
product of Schur functions,
we have that $s_\lambda = \xt_\lambda$ plus terms involving $\xt_\mu$ indexed by partitions
$\mu$ with $|\mu|<|\lambda|$.  We conclude that $\xt_\lambda$ is equal to
$s_\lambda$ minus these same terms. By induction on the size of the
partition $\lambda$, $\xt_\lambda$ is determined by these structure coefficients.
\end{proof}

\section{A multiset generalization of the Kostka numbers}
\label{section:multiset}
The Kostka numbers arise when we write the complete homogeneous symmetric function
$h_\mu$ in terms of the Schur functions, $s_\lambda$,
\[
h_\mu=\sum_{\lambda \vdash |\lambda|}K_{\lambda\mu}s_\lambda.
\]
In this section, we show that when $h_\mu$ is expanded in the
$\{\xt_\lambda\}$ basis, the resulting coefficients are given by a multiset
generalization of the Kostka numbers.

Because of the characterization from
Theorem \ref{thm:dual_basis} and Proposition \ref{prop:dual_basis}
of $\{ \xt_\lambda \}$ as the graded dual basis to the elements
$\{ s_\lambda[\Omega-1] \}$, we have the two expansions:
\[
h_\mu = \sum_{\lambda : |\lambda| \leq |\mu|} \left< h_\mu, s_\lambda[\Omega-1]\right> \xt_\lambda
\qquad\hbox{and}\qquad
s_\lambda[\Omega-1] = \sum_{\mu : |\lambda| \leq |\mu|} \left< h_\mu, s_\lambda[\Omega-1]\right> m_\mu~.
\]
Moreover, asserting that the coefficient of $\xt_\lambda$ in $h_\mu$ is equal
to the coefficient of $m_\mu$ in $s_\lambda[\Omega-1]$ is equivalent to
the statement that $\{ \xt_\lambda \}$ is the graded dual basis
to $\{ s_\lambda[\Omega-1] \}$.

A {\it multiset} $S$ is a collection of elements where repetition is allowed.
A multiset partition $\pi = \lcr S_1,S_2,\ldots,S_\ell\rcr$ of a multiset $S$
is a multiset whose elements $S_i$ are each multisets for $1 \leq i \leq \ell$
and such that $S = S_1 \uplus S_2 \uplus \cdots \uplus S_\ell$.
We will put a canonical (albeit arbitrary) total order on the the multisets
so that $S_i \leqslant S_j$ if the size of $S_i$ is smaller than the size of $S_j$
and, if they both have the same size, then the reading word of the elements of $S_i$ is a word
that is lexicographically smaller or equal to the reading of the elements in $S_j$.

A {\it column strict tableau of shape $\lambda$}
is a map  $T$ from the set ${\mathrm{cells}}(\lambda)$ to a set of labels
such that
\begin{enumerate}
\item $T(i,j) \leqslant T(i+1,j)$ if both $(i,j)$ and $(i+1,j)$ are
in ${\mathrm{cells}}(\lambda)$ and
\item
$T(i,j) < T(i,j+1)$ if both $(i,j)$ and $(i,j+1)$ are
in ${\mathrm{cells}}(\lambda)$.
\end{enumerate}
A {\it multiset tableau of a multiset $S$} is a column strict tableau $T$
such that $\lcr T(c) : c \in {\mathrm{cells}}(\lambda) \rcr$
is a multiset partition of $S$.

The number of multiset tableaux of the multiset
$\lcr 1^{\mu_1}, 2^{\mu_2}, \ldots, \ell^{\mu_\ell} \rcr$
such that $T$ is shape $\lambda$ will be denoted
${\mathcal K}_{\lambda\mu}$.  We note that if $|\lambda| = |\mu|$
then ${\mathcal K}_{\lambda\mu}$ is equal to the Kostka
coefficient $K_{\lambda\mu}$.  We also note that
${\mathcal K}_{\lambda\mu} = 0$ unless $|\lambda|\leq|\mu|$.

\begin{theorem}\label{thm:combinatorial}
For each partition $\mu$,
\[
{\mathcal K}_{\lambda\mu} = \left< h_\mu, s_\lambda[\Omega - 1] \right>~.
\]
\end{theorem}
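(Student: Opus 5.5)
Since $\langle h_\mu, f\rangle$ is the coefficient of $m_\mu$ in $f$, the plan is to compute the monomial expansion of $s_\lambda[\Omega-1]$ and read off the coefficient of $x_1^{\mu_1}x_2^{\mu_2}\cdots x_\ell^{\mu_\ell}$. The argument has two steps.

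\textbf{Step 1: realize $\Omega-1$ as a positive alphabet.} In the alphabet $X=x_1+x_2+\cdots$, write
\[
\Omega[X]-1=\sum_{k\ge1} h_k[X]=\sum_{S\neq\emptyset} x^S,
\]
where $S$ ranges over nonempty finite multisets of positive integers and $x^S=\prod_{i\in S}x_i$. Each $x^S$ is a monomial with coefficient $1$. Plethysm by a sum of monomials with coefficient one is evaluation at those monomials: $p_k[\sum_S x^S]=\sum_S (x^S)^k$. Hence $s_\lambda[\Omega-1]=s_\lambda(\{x^S\}_S)$, the Schur function evaluated on the countable alphabet of monomials $x^S$.

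We order this alphabet using the canonical total order on multisets fixed just before the theorem. The combinatorial formula for Schur functions (sum over column strict tableaux) holds for any totally ordered alphabet, since $s_\lambda$ is symmetric. Therefore
\[
s_\lambda[\Omega-1]=\sum_T \prod_{c\in\mathrm{cells}(\lambda)} x^{T(c)},
\]
where $T$ runs over column strict tableaux of shape $\lambda$ with entries nonempty multisets, weakly increasing along rows and strictly increasing up columns with respect to the chosen order.

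\textbf{Step 2: extract the coefficient.} The monomial $\prod_c x^{T(c)}$ equals $x_1^{\mu_1}\cdots x_\ell^{\mu_\ell}$ exactly when $\biguplus_c T(c)=\lcr 1^{\mu_1},\ldots,\ell^{\mu_\ell}\rcr$. This says precisely that the multiset $\lcr T(c)\rcr$ of entries is a multiset partition of that multiset, i.e.\ $T$ is a multiset tableau of content $\mu$. Hence the coefficient of $m_\mu$ is ${\mathcal K}_{\lambda\mu}$, which proves the theorem. Through the duality of Theorem~\ref{thm:dual_basis}, this is equivalently the coefficient of $\xt_\lambda$ in $h_\mu$.

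\textbf{Main obstacle.} The only delicate point is justifying the evaluation of $s_\lambda[\Omega-1]$ on an infinite alphabet. Truncate to multisets supported on $\{1,\dots,\ell\}$ of size at most $|\mu|$. This leaves finitely many letters, and dropping the remaining letters cannot affect the coefficient of $x_1^{\mu_1}\cdots x_\ell^{\mu_\ell}$. We then use the standard fact that $s_\lambda$ of a finite totally ordered alphabet is the tableau generating function. The result does not depend on which total order is chosen, since $s_\lambda$ is symmetric. This independence is why the arbitrary canonical order in the definition of multiset tableaux is harmless.
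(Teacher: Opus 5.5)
Your proposal is correct and follows essentially the same route as the paper's proof: write $\Omega[X]-1$ as the sum of the monomials $x^S$ over nonempty multisets $S$, view $s_\lambda[\Omega-1]$ as the Schur function in that alphabet, apply the column-strict tableau formula, and read off the coefficient of $x_1^{\mu_1}\cdots x_\ell^{\mu_\ell}$. Your explicit truncation to a finite alphabet and your remark that the count does not depend on the chosen total order simply make rigorous two points the paper handles more briefly, by noting that only monomials of degree at most $|\mu|$ matter.
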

\begin{proof}
We note that since $\{ h_\lambda \}$ and $\{m_\lambda\}$
are dual bases, then for any symmetric function $f$, the expression
$\left< f, h_\mu \right>$ is equal to the coefficient
of the monomial symmetric function $m_\mu$ in $f$.  This
expression is in turn equal to the coefficient
of ${\mathbf x}^\mu:=x_1^{\mu_1} x_2^{\mu_2} \cdots x_\ell^{\mu_\ell}$
in the expansion $f[X]$.  Therefore we have that $\left< h_\mu, s_\lambda[\Omega-1]\right>$
is equal to the coefficient of ${\mathbf x}^\mu$ in
the series $s_\lambda[\Omega[X]-1]$.  The expression $s_\lambda[\Omega[X]-1]$ is a series, but for
the purposes of this proof we need only need compute with the monomials of degree
smaller than or equal to $|\mu|$ since others will not contribute to the coefficient
of ${\mathbf x}^\mu$.

Expanding Equation \eqref{eq:cauchy} as a geometric series we have that
\begin{equation}\label{eq:cauchy_series}
\Omega[X]-1 = \prod_{i\geq 1} \frac{1}{1-x_i} -1 = \sum_{S} {\mathbf x}^S
\end{equation}
where the sum is over all non-empty multisets $S$.  Here
we have used the notation ${\mathbf x}^S$ to represent the monomial
$\prod_{i \in S} x_i$.
Implicitly, we will order our monomials with the same
order that we used for the multisets.
The definition of $s_\lambda[\Omega[X]-1]$ is to replace
each instance of $p_r$ in $s_\lambda$ with
$\sum_{S} ({\mathbf x}^{S})^r$ and this is equal to the monomial
expression for $s_\lambda[Y]$ where each variable $y_i$
is replaced with a monomial ${\mathbf x}^S$.

The monomial expansion of the Schur function
\cite[Equation (5.12)]{Macdonald} is
\[
s_\lambda[Y] = \sum_T {\mathbf{wt}}(T)
\]
where the sum is over all column strict tableaux $T$ of shape
$\lambda$ and whose entries are labels from the index set of
the alphabet $Y$.  Using Equation \eqref{eq:cauchy_series}, when $Y = \Omega[X]-1$
we allow each $T(c)$ to be a non-empty multiset.
The expression ${\mathbf{wt}}(T)$ is a
monomial equal to $\prod_{c \in {\mathrm{cells}}(\lambda)} y_{T(c)} =
\prod_{c \in {\mathrm{cells}}(\lambda)} {\mathbf x}^{T(c)}$.
The number of tableaux contributing a weight equal to
${\mathbf x}^\mu$ is precisely the number of
multiset tableaux of shape $\lambda$ such that
$\lcr T(c) : c \in {\mathrm{cells}}(c) \rcr$ is a
multiset partition of the multiset
$\lcr 1^{\mu_1}, 2^{\mu_2}, \ldots, \ell^{\mu_\ell} \rcr$.
\end{proof}

\begin{example} As an example, we compute the expansion of a the
element $h_{32}$ in the $\xt$-basis as :
\begin{align*}
h_{32} = \xt_{32} &+ \xt_{41} + \xt_{5} + 2 \xt_{211} + 2 \xt_{22} + 5 \xt_{31} + 3 \xt_{4}\\
&+ 3 \xt_{111} + 9 \xt_{21}  + 6 \xt_{3} + 5 \xt_{2} + 5 \xt_{11} + \xt_{1}~.
\end{align*}

In particular the coefficient of $\xt_{21}$ in this expression is $9$
and is equal to the number of the following set of multiset tableaux
(expressed using the `French' convention with the largest row on the bottom of the diagram).
\[
\young{{\Tiny 122}\cr1&1\cr}\hskip .25in
\young{{\Tiny 112}\cr1&2\cr}\hskip .25in
\young{2\cr1&{\Tiny 112}\cr}\hskip .25in
\young{{\Tiny 111}\cr2&2\cr}\hskip .25in
\young{22\cr1&11\cr}
\]
\[
\young{11\cr1&22\cr}\hskip .25in
\young{12\cr1&12\cr}\hskip .25in
\young{11\cr2&12\cr}\hskip .25in
\young{12\cr2&11\cr}
\]

\end{example}

\section{Using Sage to compute with the rook character basis}

\lstset{
    basicstyle=\footnotesize\ttfamily, 
    breaklines=true                    
}

In this section we provide a brief tutorial of computations using the rook character
basis using Sage.  The definition implemented there is through the change of basis with
the power sums as given in Equation \eqref{eq:xt_to_power}.

The $\xt_\lambda$ is defined as one of the optional bases in the ring of symmetric functions.
It can be added to the namespace by the command:
{\Small\begin{verbatim}
sage: SymmetricFunctions(QQ).inject_shorthands('all', verbose=False)
\end{verbatim}}
or by defining it directly:
{\Small\begin{verbatim}
sage: xt = SymmetricFunctions(QQ).rook_character_basis()
\end{verbatim}}
or by the shorthand:
{\Small\begin{verbatim}
sage: xt = SymmetricFunctions(QQ).xt()
\end{verbatim}}

\vskip .2in
The rook character basis is defined by its
expansion in the power sum basis (Example \ref{ex:xt_to_power}).
{\Small\begin{verbatim}
sage: p(xt[2, 1])
p[1] - p[1, 1] + 1/3*p[1, 1, 1] - 1/3*p[3]
\end{verbatim}}

\vskip .2in
The character table for the rook monoid $R_3$ can be computed by
evaluating the basis elements at the eigenvalues of
permutation matrices.
{\Small\begin{verbatim}
sage: [[xt(lam).eval_at_permutation_roots(mu)\
....:  for d1 in range(4) for mu in Partitions(d1)]\
....:  for d2 in range(4) for lam in Partitions(d2)]
[[1, 1, 1, 1, 1, 1, 1],
 [0, 1, 0, 2, 0, 1, 3],
 [0, 0, 1, 1, 0, 1, 3],
 [0, 0, -1, 1, 0, -1, 3],
 [0, 0, 0, 0, 1, 1, 1],
 [0, 0, 0, 0, -1, 0, 2],
 [0, 0, 0, 0, 1, -1, 1]]
\end{verbatim}}

\vskip .2in
The character table for the propagating partition algebra
is the matrix of change of basis coefficients from the power sum basis
to the rook character basis.
The character table for $P\!P_3$ in Example \ref{ex:PPk_char_table} can be computed
using Theorem \ref{thm:power_sum_character} using the following command.
{\Small\begin{verbatim}
sage: [[xt(p(mu)).coefficient(lam)\
....:  for d1 in range(4) for mu in Partitions(d1)]\
....:  for d2 in range(4) for lam in Partitions(d2)]
[[1, 0, 0, 0, 0, 0, 0],
 [0, 1, 1, 1, 1, 1, 1],
 [0, 0, 1, 1, 0, 1, 3],
 [0, 0, -1, 1, 0, 1, 3],
 [0, 0, 0, 0, 1, 1, 1],
 [0, 0, 0, 0, -1, 0, 2],
 [0, 0, 0, 0, 1, -1, 1]]
\end{verbatim}}

Other character bases such as the
irreducible character basis, $\st_\lambda$, and the induced trivial character
basis (or permutation characters), ${\tilde h}_\mu$, are also implemented.
The coefficient of $\xt_\lambda$ in ${\tilde h}_\mu$ is the Kostka coefficient $K_{\lambda\mu}$.
The coefficient of $\st_\lambda$ in $\xt_\mu$ is $1$ if and only if $\mu/\lambda$ is a horizontal strip.
For these results see \cite{AS_specht, OZ_hopf}.

{\Small\begin{verbatim}
sage: xt(ht[3, 2])
xt[3, 2] + xt[4, 1] + xt[5]
sage: xt(ht[4, 2, 2, 1]).coefficient([5, 4])
3
sage: SemistandardTableaux(shape=[5,4], eval=[4,2,2,1]).cardinality()
3
sage: st(xt[3,2])
st[2] + st[2, 1] + st[2, 2] + st[3] + st[3, 1] + st[3, 2]
sage: s[3,2](s[1] + 1)
s[2] + s[2, 1] + s[2, 2] + s[3] + s[3, 1] + s[3, 2]
\end{verbatim}}

\vskip .2in
The single cell Pieri rule is given in \cite[Theorem 2.16]{MS21}.
{\Small\begin{verbatim}
sage: xt[3,2]*xt[1]
xt[2, 2, 1] + xt[3, 1, 1] + 2*xt[3, 2] + xt[3, 2, 1] + xt[3, 3] + xt[4, 1] + xt[4, 2]
\end{verbatim}}

More generally the structure coefficients are stated
in \cite[Theorem 1]{MS25} and agree with the smash (co)product of \cite{AFM, Ying20, Ying22}
on the Schur basis.

{\Small\begin{verbatim}
sage: lam = Partition([3,2]); mu = Partition([2,1,1])
sage: (xt(lam)*xt(mu)).coefficient([3, 1, 1, 1, 1])
8
sage: X = tensor([s[1], s.one()]); Y = tensor([s.one(), s[1]])
sage: s[3,1,1,1,1](X+Y+X*Y).monomial_coefficients()[(lam,mu)]
8
\end{verbatim}}

\vskip .2in
The ring of symmetric functions is a Hopf algebra.  The coproduct on the rook character basis
is the same as that on the Schur basis and the coefficients follow the Littlewood-Richardson rule
\cite[Corollary 14]{OZ_hopf}.
{\Small\begin{verbatim}
sage: xt[2,2].coproduct()
xt[] # xt[2, 2] + xt[1] # xt[2, 1] + xt[1, 1] # xt[1, 1] + xt[2] # xt[2]
 + xt[2, 1] # xt[1] + xt[2, 2] # xt[]
sage: s[2,2].coproduct()
s[] # s[2, 2] + s[1] # s[2, 1] + s[1, 1] # s[1, 1] + s[2] # s[2]
 + s[2, 1] # s[1] + s[2, 2] # s[]
\end{verbatim}}

\bibliographystyle{plain}
\bibliography{bibliography}

\end{document}